\documentclass[final,1p,times]{article}
\usepackage[utf8]{inputenc}

\usepackage{latexsym,hyperref}
\usepackage{amssymb,amsbsy,amsmath,amsfonts,amssymb,amscd,amsthm,mathabx,stmaryrd}
\usepackage{epsfig,graphicx,subfigure}

\usepackage{pdfsync}
\usepackage{psfrag}
\usepackage{colortbl}

\usepackage[colorinlistoftodos,bordercolor=orange,backgroundcolor=orange!20,linecolor=orange,textsize=scriptsize]{todonotes}


\newcommand{\commentout}[1]{}
\newcommand{\R}{\mathbb{R}}
\newcommand{\N}{\mathbb{N}}
\newcommand{\Z}{\mathbb{Z}}

\newcommand {\e}  {\varepsilon}

\newcommand {\Chi} {{\bf \raise 2pt \hbox{$\chi$}} }
\newcommand {\cA} { {\mathcal A} }

\newcommand {\cU} { {\mathcal U} }

\newcommand {\f}   {\frac}
\newcommand {\p}   {\partial}

\newcommand{\dis}{\displaystyle}
\newcommand{\beq}{\begin{equation}}
\newcommand{\beqa} {\begin{array}{rl}}
\newcommand{\eeq}{\end{equation}}
\newcommand{\eeqa}{\end{array}}

\newcommand{\vertiii}[1]{{\left\vert\kern-0.25ex\left\vert\kern-0.25ex\left\vert #1 
    \right\vert\kern-0.25ex\right\vert\kern-0.25ex\right\vert}}
    
\newtheorem{theorem}{Theorem}
\newtheorem{lemma}{Lemma}
\newtheorem{remark}{Remark}
\newtheorem{proposition}[lemma]{Proposition}
\newtheorem{corollary}{Corollary}

\DeclareMathOperator*{\supess}{sup\,ess}

\definecolor{orange}{cmyk}{0,0.5,1,0.3}

\newcommand{\C}{\mathbb C}


\begin{document}

\title{Cyclic asymptotic behaviour of a population reproducing by fission into two equal parts}

\author{\'Etienne Bernard \thanks{Laboratoire de G\'eod\'esie, IGN-LAREG, B\^atiment Lamarck A et B, 35 rue H\'el\`ene Brion, 75013 Paris. Email: etienne.bernard@ign.fr}
 \and Marie Doumic \thanks{Sorbonne Universit\'es, Inria, UPMC Univ Paris 06, Mamba project-team, Laboratoire Jacques-Louis Lions. Email adress: marie.doumic@inria.fr} \thanks{Wolfgang Pauli Institute, c/o Faculty of Mathematics of the University of Vienna}
 \and Pierre Gabriel \thanks{Laboratoire de Math\'ematiques de Versailles, UVSQ, CNRS, Universit\'e Paris-Saclay,  45 Avenue des \'Etats-Unis, 78035 Versailles cedex, France. Email: pierre.gabriel@uvsq.fr}}

\maketitle

\begin{abstract}
We study the asymptotic behaviour of the following linear growth-fragmentation equation
$$ \f{\partial}{\partial t}  u(t,x) + \dfrac{\partial}{
\partial x} \big(x u(t,x)\big) + B(x) u(t,x) =4 B(2x)u(t,2x),$$
and prove that under fairly general assumptions on the division rate $B(x),$
its solution converges  towards an oscillatory function,
explicitely given  by the projection of the initial state on the space generated by the countable set of the dominant eigenvectors of the operator.
Despite the lack of hypocoercivity of the operator, the proof relies on a general relative entropy argument in a convenient weighted $L^2$ space, where well-posedness is obtained via semigroup analysis. We also propose a non-diffusive numerical scheme, able to capture the oscillations.
\end{abstract}

\noindent{\bf Keywords:} growth-fragmentation equation, self-similar fragmentation, long-time behaviour, general relative entropy, periodic semigroups, non-hypocoercivity

\medskip

\noindent{\bf MSC 2010:} (Primary) 35Q92, 35B10, 35B40, 47D06, 35P05 ; (Secondary) 35B41, 92D25, 92B25

\sloppy

\section*{Introduction}

Over the last decades, the mathematical study of the growth-fragmentation equation and its linear or nonlinear variants has led to a wide literature. 

Several facts  explain this lasting interest.
First, variants of this equation are used to model a wide range of applications, from the internet protocol suite to cell division or polymer growth; it is also obtained as a useful rescaling for the pure fragmentation equation (then the growth rate is linear). Second, despite the relative simplicity of such a one-dimensional equation, the study of its behaviour reveals complex and interesting interplays between growth and division, and a kind of dissipation even in the absence of diffusion. Finally, the underlying stochastic process has also - and for the same reasons - raised much interest, and only recently have the links between the probabilistic approach and the deterministic one begun to be investigated.

In its general linear form,  the equation may be written as follows
\begin{equation}\label{eq:generalform}\tag{GF}
\f{\partial}{\partial t}  u(t,x) + \dfrac{\partial}{
\partial x} \big(g(x) u\big) + B(x) u(t,x) =\int\limits_x^\infty k(y,x) B(y)u(t,y)dy, \end{equation} 
where $u(t,x)$ represents the concentration of individuals of size $x\geq 0$ at time~$t$, $g(x)\geq 0$ their growth rate, $B(x)\geq 0$ their division rate, and $k(y,x)\geq 0$ the quantity of individuals of size~$x$ created out of the division of individuals of size~$y.$ 

The long-time asymptotics of this equation has been studied and improved in many successive papers. Up to our knowledge, following the biophysical pioneering papers~\cite{BellAnderson, Bell, SinkoStreifer2}, the first mathematical study was carried out in~\cite{DiekmannHeijmansThieme1984}, where the equation was considered for the mitosis kernel / binary fission ($k(y,x)=2 \delta_{x=\f{y}{2}}$) in a compact set $x\in [\alpha,\beta].$ The authors proved the central  behaviour of the equation, already conjectured in~\cite{BellAnderson}: under balance and regularity assumptions on the coefficients, there exists a unique dominant eigenpair $(\cU(x)\geq 0, \lambda >0)$ such that $u(t,x)e^{-\lambda t} \to\cU(x)$ in a certain sense, with an exponential speed of convergence. In~\cite{DiekmannHeijmansThieme1984}, the proofs were based on semigroup methods and stated for the space of continuous functions provided with the supremum norm. Many studies followed:  some of them, most notably and recently~\cite{mischler:frag}, relaxing the previous assumptions in the context of semigroup theory~\cite{Arino,BanasiakArlotti,BanasiakLamb,BanasiakPichorRudnicki,nagel2,GreinerNagel}; others deriving explicit solutions~\cite{HallWake_1990,Zaidi_2015,Zaidi_2015-b} or introducing new methods - one of the most elegant and powerful being the \emph{General Relative Entropy}~\cite{MMP2}, leading to convergence results in norms weigthed by the adjoint eigenproblem. However, though in some cases the entropy method may lead to an explicit spectral gap in some integral norm~\cite{PR,LP,PPS3}, or when the coefficients are such that an entropy-entropy dissipation inequality exists~\cite{CCM,BCG}, in general it fails to provide a rate of convergence.

On the margins of this central behaviour, some papers investigated non-uniqueness~\cite{Banasiak2} or other kinds of asymptotics, happening for instance when the balance or mixing assumptions between growth and division fail to be satisfied: e.g. when the fragmentation dominates the growth~\cite{BanasiakLamb,Bertoin,BertoinWatson,Haas,DE}. A stronger ``memory'' of the initial behaviour may then be observed, contrary to the main case, where the only memory of the initial state  which remains asymptotically is a weighted average.

Among these results, the case when the growth rate is linear, \emph{i.e.} $g(x)=x,$ and the mother cell divides into two equal offspring, \emph{i.e.} $k(y,x)=2\delta_{x=\f{x}{2}},$ holds a special place, both for modelling reasons - it is the emblematic case of idealised bacterial division cycle, and also the rescaling adapted to the pure fragmentation equation - and as a limit case where the standard results fail to be true. The equation is then
\begin{equation}\label{eq:main}\left\{\begin{array}{l}
\dis\f{\partial}{\partial t}  u(t,x) + \dfrac{\partial}{
\partial x} \big(x u(t,x)\big) + B(x) u(t,x) =4 B(2x)u(t,2x),\qquad x>0,
\\ \\
u(0,x)=u^{\rm{in}}(x).\end{array}\right.
\end{equation} 
In 1967, 
G. I. Bell and E. C. Anderson already noted~\cite{BellAnderson}:  

{\it \small "If the
rate of cell growth is proportional to cell volume, then  (...) a daughter cell, having just half the volume of the parent cell, will grow at just half the rate of the parent cell. It follows that if one starts with
a group of cells of volume $V$, age $r$, at time $0$, then any daughter cell of this group, no matter when formed, will always have a volume equal to half the volume of an undivided cell in the group. There will then be no dispersion of cell volumes with time, and the population will consist at any time of a number of cell generations differing by just a factor of 2 in volume. For more general initial conditions, the population at late times will still reflect the initial state rather than simply growing exponentially in time. "}

After~\cite{BellAnderson}, the reason for this specific behaviour was stated in~\cite{DiekmannHeijmansThieme1984,Heijmans1985}: instead of a unique dominant eigenvalue, there exists a countable set  of dominant eigenvalues, namely $1+ \f{2i\pi}{\log 2}\Z.$ O. Diekmann, H. Heijmans and H. Thieme explain in~\cite{DiekmannHeijmansThieme1984}:

{\it \small 
The total population size still behaves like $[e^t]$ but convergence in shape does not take place. Instead the initial size distribution {\bf turns around and around} while numbers are multiplied. (...) The following Gedanken experiment illustrates the biological reason.
Consider two cells A and B with
equal size and assume that at some time instant $t_0$ cell $A$ splits into $a$ and $a$.
During the time interval $[t_0, t_1]$, $a$, $a$ and $B$ grow and at $t_1$ cell $B$ splits into $b$
and $b$. If $g(x) =cx$, the daughter cells $a$ and $b$ will have equal sizes just as their
mothers $A$ and $B$. ln other words, the relation "equal size" is hereditary and
extends over the generations. The growth model behaves like a multiplicating
machine which copies the size distribution.}

In~\cite{GreinerNagel}, G. Greiner and R. Nagel are the first to prove this long-time periodic behaviour.
They use the theory of positive semigroups combined with spectral analysis to get the convergence to a semigroup of rotations.
The method relies on some compactness arguments, which force the authors to set the equation on a compact subset of $(0,\infty)$ ($x\in[\alpha,\beta]$ with $\alpha>0$).

In the present paper, we extend the result to the equation set on the whole~$\R_+.$
Additionally, we determine explicitly the oscillatory limit by the means of a projection of the initial condition on the dominant eigenfunctions.
Our method relies on General Relative Entropy inequalities (Section~\ref{sec:GRE}), which unexpectedly may be adapted to this case and which are the key ingredient for an explicit convergence result (Theorem~\ref{theo:asympto:GRE} in Section~\ref{sec:L2}, which is the main result of our study).
We illustrate our results numerically in Section~\ref{sec:num}, proposing a non-diffusive scheme able to capture the oscillations.

\section{Eigenvalue problem and Entropy}\label{sec:GRE}

To study the long time asymptotics of Equation~\eqref{eq:main}, we elaborate on previously established results concerning the dominant positive eigenvector and general relative entropy inequalities. 

\subsection{Dominant eigenvalues and balance laws}

The eigenproblem and adjoint eigenproblem related to Equation~\eqref{eq:main} are
\begin{align}\label{eq:vep}
&\lambda \cU(x) + \big(x\cU(x)\big)' + B(x)\cU (x)=4B(2x)\cU(2x),\\
\label{eq:vepadj}
&\lambda \phi(x) - x \phi'(x) + B(x)\phi (x)=2B(x)\phi\Big(\f{x}{2}\Big).
\end{align}
Perron eigenproblem consists in finding positive solutions $\cU$ to~\eqref{eq:vep}, which in general give the asymptotic behaviour of time-dependent solutions, which align along $e^{\lambda t} \cU (x)$. Recognizing here a specific case of the eigenproblem studied in~\cite{DG}, we work under the following assumptions:
\begin{equation}\label{hyp:B}\left\{
\begin{array}{l}
B: (0,\infty) \to (0,\infty) \text{ is locally integrable}, \\  \\
\text{$\exists z_0,\gamma_0,K_0>0,\ \forall x<z_0,\quad B(x)\leq K_0 x^{\gamma_0}$},\\ \\
\text{$\exists z_1,\gamma_1,\gamma_2,K_1,K_2>0,\ \forall x>z_1,\quad K_1x^{\gamma_1}\leq B(x)\leq K_2 x^{\gamma_2}$}.\end{array}\right.
\end{equation}
We then have the following result, which is a particular case of~\cite[Theorem~1]{DG}.

\begin{theorem}\label{th:Perron}
Under Assumption~\eqref{hyp:B}, there exists a unique positive eigenvector $\cU \in L^1(\R_+)$ to~\eqref{eq:vep} normalised by $\int_0^\infty x\, \cU (x)dx=1$. It is related to the eigenvalue $\lambda=1$ and to the adjoint eigenvector $\phi(x)=x$ solution to~\eqref{eq:vepadj}. 

Moreover,   $x^\alpha \cU \in L^\infty(\R_+)$ for all $\alpha\in \R$, and $ \cU \in W^{1,1} (\R_+).$ 
\end{theorem}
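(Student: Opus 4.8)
The plan is to recognise Theorem~\ref{th:Perron} as the specialisation of~\cite[Theorem~1]{DG} to the self-similar coefficients $g(x)=x$ and $k(y,x)=2\delta_{x=y/2}$: one checks that Assumption~\eqref{hyp:B} is exactly the set of hypotheses under which that theorem applies, and then one reads off the explicit eigenvalue, the adjoint eigenfunction, and the quantitative bounds. I nonetheless indicate the main arguments for completeness. For the identification of the eigenpair, plugging $\phi(x)=x$ into~\eqref{eq:vepadj} gives $\lambda x - x + B(x)x = 2B(x)\tfrac{x}{2} = B(x)x$, which holds precisely when $\lambda=1$; equivalently, multiplying~\eqref{eq:main} by $x$, integrating, integrating the transport term by parts and using the change of variables $y=2x$ in the fragmentation term yields $\f{d}{dt}\int_0^\infty xu(t,x)\,dx = \int_0^\infty xu(t,x)\,dx$, which is the statement that $\phi(x)=x$ is the adjoint dominant eigenfunction for $\lambda=1$. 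If $\cU>0$ solves~\eqref{eq:vep} for some eigenvalue $\lambda_\ast$ with enough decay at $0$ and $\infty$, pairing~\eqref{eq:vep} with $\phi(x)=x$ (the boundary terms then vanish) gives $\lambda_\ast\int_0^\infty x\cU\,dx = \int_0^\infty x\cU\,dx$, hence $\lambda_\ast=1$ since $\int_0^\infty x\cU\,dx>0$; the normalisation $\int_0^\infty x\cU\,dx=1$ is then the usual biorthogonality $\langle\cU,\phi\rangle=1$.

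The existence and uniqueness of a positive $\cU\in L^1(\R_+)$ is the core conclusion of~\cite[Theorem~1]{DG}. Its proof proceeds by truncating the size interval to $[\e,R]$ with boundary conditions chosen so that the truncated generator has compact resolvent and generates a positive, irreducible semigroup, applying the Krein--Rutman theorem to obtain a positive eigenpair on $[\e,R]$, and then letting $\e\to0$ and $R\to\infty$; the passage to the limit requires uniform moment bounds — established as in the next paragraph — that rule out concentration of mass at $0$ and escape of mass to $\infty$, while uniqueness follows from the strict positivity of the limiting problem.

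For the quantitative statements, multiply~\eqref{eq:vep} (with $\lambda=1$) by $x^{\alpha-1}$ and integrate: the transport term gives $-(\alpha-1)\int x^{\alpha-1}\cU$ after integration by parts and the fragmentation term gives $2^{2-\alpha}\int x^{\alpha-1}B(x)\cU(x)\,dx$ after the substitution $y=2x$, so that $(2-\alpha)\int_0^\infty x^{\alpha-1}\cU\,dx = (2^{2-\alpha}-1)\int_0^\infty x^{\alpha-1}B(x)\cU(x)\,dx$, an identity first derived on the truncated problem and then passed to the limit. Starting from $\int_0^\infty\cU<\infty$ and $\int_0^\infty x\cU=1$ and using $B(x)\ge K_0x^{\gamma_0}$ for $x\ge x_0$, this identity shows that finiteness of $\int x^{\alpha-1}\cU$ forces finiteness of $\int_{x_0}^\infty x^{\alpha-1+\gamma_0}\cU$, whence by induction all moments $\int_0^\infty x^\beta\cU<\infty$ for $\beta\ge0$; near $x=0$, $B/x\in L^1_{loc}$ together with the fact that the characteristics $\dot x=x$ transport mass away from the origin yields in the same way the finiteness of all negative moments, i.e. that $\cU$ decays faster than any power at $0$. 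These $L^1$ moment bounds are upgraded to $x^\alpha\cU\in L^\infty(\R_+)$ by writing the equation for $v:=x\cU$ as $v'(x) = -\f{1+B(x)}{x}\,v(x) + \f{2B(2x)}{x}\,v(2x)$ and integrating it along the flow. Finally, $(x\cU)' = 4B(2x)\cU(2x)-(1+B(x))\cU(x)$ belongs to $L^1(\R_+)$ — the a priori singular contribution $B\cU/x$ near $0$ being absorbed by the super-algebraic decay of $\cU$ — so $x\cU\in W^{1,1}(\R_+)$ and $\cU' = \bigl((x\cU)'-\cU\bigr)/x\in L^1(\R_+)$, that is $\cU\in W^{1,1}(\R_+)$.

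The main difficulty is concentrated at $x=0$: obtaining that every negative moment is finite (equivalently $x^\alpha\cU\in L^\infty$ for all $\alpha<0$) is delicate because the only a priori integrability near the origin, $B/x\in L^1_{loc}$, barely compensates the $1/x$ singularity of the transport coefficient, so one needs a genuine Gronwall/characteristics estimate rather than a soft compactness argument; this is also precisely the step that makes the truncation-and-limit construction of $\cU$ nontrivial. The behaviour at $\infty$, by contrast, is controlled in a routine way by the lower bound $B(x)\ge K_0x^{\gamma_0}$.
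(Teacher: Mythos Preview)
Your proposal is correct and follows the same approach as the paper: the paper does not give a proof of Theorem~\ref{th:Perron} at all, but simply states that it ``is a particular case of~\cite[Theorem~1]{DG}.'' You do the same citation and then go further, sketching the content of that cited result --- the direct verification that $\phi(x)=x$ solves~\eqref{eq:vepadj} with $\lambda=1$, the Krein--Rutman/truncation scheme for existence, the moment identity $(2-\alpha)\int x^{\alpha-1}\cU=(2^{2-\alpha}-1)\int x^{\alpha-1}B\cU$ and its bootstrap to all moments, and the passage from moments to $L^\infty$ and $W^{1,1}$ bounds --- none of which the paper reproduces. So there is no discrepancy in approach; you have simply supplied more than the paper does.
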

As already noticed in~\cite{DiekmannHeijmansThieme1984}, though $1$ is the unique eigenvalue related to a positive eigenvector, here it is not the unique dominant eigenvalue: we have a set of eigentriplets $(\lambda_k,\cU_k,\phi_k)$ with $k\in \Z$ defined by
\begin{equation}\label{def:vepk}
\lambda_k=1+\f{2ik\pi}{\log 2},\qquad \cU_k(x)=x^{-\f{2ik \pi}{\log 2}} \cU(x),\qquad \phi_k (x)= x^{1+\f{2ik\pi}{\log 2}}.
\end{equation}
This is  the first difference with the most studied case, where the Perron eigenvalue happens to be the unique dominant one: here all these eigenvalues have a real part equal to $1$, so that they all belong to the peripheral spectrum. The natural questions which emerge are to know whether this set of dominant eigenvectors is attractive, as it is the case when it is formed by a unique function; and if so, where the proofs are different.

First we notice an important property:
the family $\big((\cU_k)_{k\in\Z},(\phi_k)_{k\in\Z}\big)$ is biorthogonal for the bracket
\[\langle f,\varphi\rangle:=\int_0^\infty f(x)\varphi(x)\,dx,\]
which means that
\beq\label{eq:biorthogonal}
\forall(k,l)\in\Z^2,\qquad\langle\cU_k,\phi_l\rangle=\delta_{kl}.
\eeq
This  is a direct consequence of the normalization of the Perron eigenvectors which writes $\langle\cU,\phi\rangle=1$ and the fact that $\lambda_k\neq\lambda_l$ for $k\neq l.$

\medskip

Even though we are interested in real-valued solutions to Equation~\eqref{eq:main}, due to
the fact that the dominant eigenelements have nonzero imaginary part, we have to work in spaces of complex-valued functions.
Of course real-valued solutions are readily obtained from complex-valued solutions by taking the real or imaginary part.
From now on when defining functional spaces we always consider measurable functions from $\R_+$ to $\C.$

\medskip

The biorthogonal property~\eqref{eq:biorthogonal} can be extended into balance laws for general solutions to Equation~\eqref{eq:main}.
For $u^{\rm{in}} \in L^1(\phi(x)dx)$ and $u \in {\mathcal C} (\R_+,L^1(\phi(x)dx))$ solution to~\eqref{eq:main} we have the conservation laws
\beq\label{eq:balance}
\forall k\in\Z,\ \forall t\geq0,\qquad\langle u(t,\cdot),\phi_k\rangle\,e^{-\lambda_kt}=\langle u^{\rm{in}},\phi_k\rangle.
\eeq

\subsection{General Relative Entropy inequalities}

Additionally to the conservation laws above, we have a set of entropy inequalities.
In this section, we remain at a formal level.
Rigorous justification of the stated results will appear once the existence and uniqueness results are established.
\begin{lemma}[General Relative Entropy Inequality] \label{lem:GRE}
Let $B$ satisfy Assumption~\eqref{hyp:B}, $\cU$ be the Perron eigenvector defined in Theorem~\ref{th:Perron} and $u(t,x)$ be a solution of Equation~\eqref{eq:main}. Let $H:\C\to \R_+$ be a positive, differentiable and convex function. Provided the quantities exist, we have
$$\f{d}{dt} \int\limits_0^\infty x\,\cU (x) H\Big(\f{u(t,x)}{\cU(x)e^t}\Big)   dx
=-D^H[u(t)e^{-t}]\leq 0,$$
with $D^H$ defined by
\begin{align*}
D^H[u]:=\int\limits_0^\infty xB(x)\, \cU(x)&\biggl[
H\Big(\f{u(\f{x}{2})}{\cU(\f{x}{2})}\Big) - H\Big(\f{u(x)}{\cU(x)}\Big) \nonumber\\
& \hskip8mm - \nabla H\Big(\f{u(x)}{\cU(x)}\Big)\cdot\Big(\f{u(\f{x}{2})}{\cU(\f{x}{2})}-\f{u(x)}{\cU(x)}\Big)
\biggr]dx,
\end{align*}
where $\nabla H$ is the gradient of $H$ obtained by identifying $\C$ with $\R^2$
and $\,\cdot$ stands for the canonical inner product in $\R^2.$
Moreover, for $H$ strictly convex, $u: \R_+ \to \C$ satisfies $D^H[u]=0$ \emph{iff} it is such that 
\[\f{u(x)}{\cU(x)}=\f{u(2x)}{\cU(2x)},\qquad \text{a.e. }\;x >0.\]
In particular, for all $k\in\Z,$ $D^H[\cU_k]=0.$
\end{lemma}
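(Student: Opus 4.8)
The plan is to verify the General Relative Entropy identity by a direct but careful differentiation of $t\mapsto\int_0^\infty x\,\cU(x)H\big(u(t,x)e^{-t}/\cU(x)\big)\,dx$, using Equation~\eqref{eq:main} for $\partial_t u$ and Equation~\eqref{eq:vep} for $\cU$. Write $v(t,x):=u(t,x)e^{-t}/\cU(x)$. Differentiating under the integral sign and using $\partial_t u = -\partial_x(xu) - B(x)u + 4B(2x)u(t,2x)$ together with $x\partial_t v = e^{-t}\partial_t u/\cU - xv$, one gets a transport term, a loss term $-xB(x)\cU(x)\nabla H(v(x))\cdot v(x)$, and a gain term $4xB(2x)u(t,2x)e^{-t}\nabla H(v(x))/\cU(x)$. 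For the gain term I substitute the definition of $v$ at $2x$: $u(t,2x)e^{-t}=v(2x)\cU(2x)$, so it becomes $4xB(2x)\cU(2x)\,\nabla H(v(x))\cdot v(2x)$, and then change variables $x\mapsto x/2$ to turn it into $xB(x)\cU(x)\nabla H(v(x/2))\cdot v(x)$ after using $\cU(x/2)$ appropriately — in fact the cleanest route is the standard one: multiply Equation~\eqref{eq:vep} by $xH(v)$, multiply Equation~\eqref{eq:main} by $x e^{-t}\nabla H(v)/\cU$, add, and integrate, so that every transport and linear term cancels or combines and one is left precisely with the integrand defining $-D^H$. The change of variables $y=2x$ in the fragmentation integral (using $4B(2x)\cU(2x)$ paired against $xH(\cdot)$ versus $2B(x)\cU(x)$ against $xH(\cdot)$ after rescaling) is what produces the symmetric difference $H(v(x/2))-H(v(x))-\nabla H(v(x/2))\cdot(v(x/2)-v(x))$ inside $D^H$.

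Next I would justify the sign: since $H$ is convex on $\C\cong\R^2$, for any two points $a,b$ one has $H(a)-H(b)-\nabla H(b)\cdot(a-b)\ge 0$; applying this with $b=v(x/2)$, $a=v(x)$ shows the bracket in $D^H$ is pointwise nonnegative, and since $x$, $B(x)$, $\cU(x)$ are all nonnegative, $D^H[u e^{-t}]\ge 0$, hence the entropy is nonincreasing. For the equality case with $H$ strictly convex, convexity gives that $H(a)-H(b)-\nabla H(b)\cdot(a-b)=0$ forces $a=b$; thus $D^H[u]=0$ iff $v(x)=v(x/2)$ for a.e. $x>0$ on the set where $xB(x)\cU(x)>0$, and by Assumption~\eqref{hyp:B} (with $B>0$) together with $\cU>0$ from Theorem~\ref{th:Perron}, this weight is positive a.e., so the condition is $u(x)/\cU(x)=u(2x)/\cU(2x)$ a.e. Finally, for $u=\cU_k$ we have $u(x)/\cU(x)=x^{-2ik\pi/\log 2}$ by~\eqref{def:vepk}, and since $(2x)^{-2ik\pi/\log 2}=x^{-2ik\pi/\log 2}\cdot 2^{-2ik\pi/\log 2}=x^{-2ik\pi/\log 2}$ because $2^{-2ik\pi/\log 2}=e^{-2ik\pi}=1$, the scale-invariance holds and $D^H[\cU_k]=0$.

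The main obstacle is not the algebra of the identity, which is the classical GRE computation, but rather making the complex-valued setting honest: $H$ is a function of a complex variable treated as a real function on $\R^2$, so ``$\nabla H$'' and the dot product must be interpreted in the real sense, and one must check that the chain rule $\frac{d}{dt}H(v)=\nabla H(v)\cdot \partial_t v$ and the manipulations involving $(xv)'$ are valid with this real-differentiable, non-holomorphic $H$. Since the statement is declared formal here (``we remain at a formal level''), I would not dwell on integrability or boundary terms at $x=0$ and $x=\infty$ — those are deferred to the later well-posedness section — but I would be explicit that the convexity inequality and its strict-convexity rigidity are exactly the two-point (chordal) convexity facts on $\R^2$, since that is the only place positivity enters. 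One minor point worth stating cleanly: the equivalence between ``$v(x)=v(x/2)$ a.e.'' and ``$v(x)=v(2x)$ a.e.'' is immediate by the substitution $x\mapsto 2x$, so either formulation of the equality case is the same condition.
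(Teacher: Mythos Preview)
Your proposal is correct and follows exactly the approach the paper takes: the paper's own proof is a single sentence stating that the identity is ``immediate and now standard, carried out by calculation term by term and use of the equations~\eqref{eq:main}, \eqref{eq:vep} and~\eqref{eq:vepadj}, see for instance~\cite[p.92]{BP},'' and your write-up is precisely an explicit unfolding of that standard GRE computation together with the Bregman-divergence convexity argument for the sign and equality case. Your additional care about the real-$\R^2$ interpretation of $\nabla H$ and the positivity of the weight $xB(x)\cU(x)$ a.e.\ (needed for the strict-convexity rigidity) goes slightly beyond what the paper spells out but is entirely in the same spirit.
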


The proof is immediate and now standard, carried out by calculation term by term and use of the equations~\eqref{eq:main}, \eqref{eq:vep} and~\eqref{eq:vepadj}, see for instance~\cite[p.92]{BP}.

In the cases where the Perron eigenvector is a unique dominant eigenvector, the entropy inequality  is a key step to obtain the convergence of $u(t,x)e^{-t}$ towards $\langle u^{\rm{in}},\phi\rangle\,\cU (x)$.  The idea is to prove that $u(t,x)e^{-t}$ tends to a limit $u_\infty$ such that $D^H[u_\infty]=0,$ which in general implies that $u_\infty$ is proportional to $\cU;$ the conservation law then giving the proportionality constant.

Here however, since any function $v(x)=f(\log x)\cU(x)$ with $f$ $\log 2$-periodic satisfies $D^H[v]=0,$ the usual convergence result does not hold.
This is due to the lack of hypocoercivity in our case.
It is known from~\cite{CCM,BCG,GS14} that the general form~\eqref{eq:generalform} of the growth-fragmentation equation is coercive for particular choices of the coefficients, in the sense that the differential inequality
$\frac{d}{dt}\|u(t,\cdot)\|\leq -\nu \|u(t,\cdot)\|$ holds for some positive constant $\nu$ and a well-chosen norm $\|\cdot\|,$ when $u^{\rm{in}}$ is such that $\langle u^{\rm{in}},\phi\rangle=0.$
As already noticed in~\cite{LP} such an inequality cannot be valid for an entropic norm in the case of equal mitosis.
Indeed if for some time $t\geq0$ (for instance $t=0$) the solution satisfies $u(t,x)/\cU(x)=u(t,2x)/\cU(2x),$ then the time derivative of the norm vanishes.
However in this case the equation can be hypocoercive in the sense (see~\cite{Villani}) that $\|u(t,\cdot)\|\leq Ce^{-\nu t}\|u^{\rm{in}}\|$ holds for some positive constants $C,\nu$ and any initial distribution satisfying $\langle u^{\rm{in}},\phi\rangle=0.$ This result is proved in~\cite{mischler:frag,BernardGabriel_2} for a class of weighted $L^1$ norms in the case of a constant growth rate $g.$
Roughly speaking this situation of a non-coercive but hypocoercive equation appears when the dissipation of entropy can vanish for a nontrivial set of functions, but this set is unstable for the dynamics of the equation.
In our case the equation is not hypocoercive because the set of functions with null entropy dissipation is invariant under the flow, as expressed by the following lemma.
\begin{lemma}\label{lm:nonhypocoercivity}
Consider a strictly convex function $H$ and let $u(t,x)$ be the solution to Equation~\eqref{eq:main} with initial condition $u(0,x)=u^{\rm{in}}(x).$
We have the invariance result
\[D^H[u^{\rm{in}}]=0\qquad\implies\qquad D^H[u(t,\cdot)]=0,\quad\forall t\geq0.\]
\end{lemma}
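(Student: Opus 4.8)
The plan is to characterize the set $\cZ^H:=\{v:\R_+\to\C : D^H[v]=0\}$ explicitly and then show it is invariant under the semigroup $(S(t))_{t\geq0}$ generated by Equation~\eqref{eq:main}. By the last assertion of Lemma~\ref{lem:GRE}, for $H$ strictly convex we have $v\in\cZ^H$ if and only if $u(x)/\cU(x)=u(2x)/\cU(2x)$ a.e., i.e. $v(x)=f(\log_2 x)\cU(x)$ for some $1$-periodic function $f$. Crucially, this characterization is \emph{independent of $H$}, so it suffices to exhibit \emph{one} strictly convex $H_0$ for which the invariance holds; the general statement then follows since all the sets $\cZ^H$ coincide. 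A convenient choice is $H_0(z)=|z|^2$, which is the natural quadratic entropy and fits the $L^2$ framework used later in the paper.

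First I would write the identity of Lemma~\ref{lem:GRE} for $H_0=|\cdot|^2$: along a solution $u$,
\[
\f{d}{dt}\int_0^\infty x\,\cU(x)\,\Bigl|\f{u(t,x)}{\cU(x)e^t}\Bigr|^2\,dx=-D^{H_0}[u(t)e^{-t}]\leq0.
\]
If $D^{H_0}[u_0]=0$, then $u_0\in\cZ^{H_0}$, so $u_0=f(\log_2 x)\cU(x)$ with $f$ $1$-periodic; in particular $u_0$ is a (countable) superposition of the eigenvectors $\cU_k$ of~\eqref{def:vepk}, namely, expanding $f$ in Fourier series $f(y)=\sum_k c_k e^{2ik\pi y}$ gives $u_0=\sum_k c_k\,\cU_k$ formally, since $\cU_k(x)=x^{-2ik\pi/\log 2}\cU(x)=e^{-2ik\pi\log_2 x}\cU(x)$. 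Each $\cU_k$ solves~\eqref{eq:main} with $u(t,x)=e^{\lambda_k t}\cU_k(x)$ and $\mathrm{Re}\,\lambda_k=1$, so $u(t,x)e^{-t}=\sum_k c_k e^{2ik\pi t/\log 2}\cU_k(x)$ is, for each fixed $t$, again of the form $g_t(\log_2 x)\cU(x)$ with $g_t$ $1$-periodic. Hence $u(t,\cdot)\in\cZ^{H_0}$, i.e. $D^{H_0}[u(t,\cdot)]=0$ for all $t\geq0$, which is the claim.

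An alternative, more robust route that avoids any eigenfunction expansion: observe that the map $t\mapsto\int_0^\infty x\,\cU(x)|u(t,x)e^{-t}/\cU(x)|^2\,dx$ is nonincreasing and, because $D^{H_0}\geq0$ integrates a nonnegative integrand (strict convexity of $|\cdot|^2$), this function is constant if and only if $D^{H_0}[u(t)e^{-t}]=0$ for a.e. $t$. But if $D^{H_0}[u_0]=0$ the function starts with zero derivative; to propagate this one uses the time-translation structure: $\tilde u(t,x):=u(t+s,x)$ solves~\eqref{eq:main} with datum $u(s,\cdot)$, so it is enough to know that $\cZ^{H_0}$ is closed under the flow for arbitrarily small times, which follows from the continuity in time of $u$ together with the linear-span description of $\cZ^{H_0}$ as the closed subspace generated by $\{\cU_k\}_{k\in\Z}$ (the balance laws~\eqref{eq:balance} show this subspace is invariant: if $u_0=\sum c_k\cU_k$ then $\langle u(t,\cdot),\phi_k\rangle e^{-\lambda_k t}=c_k$, and one checks $u(t,\cdot)-\sum c_k e^{\lambda_k t}\cU_k$ has all entropy-conserved moments zero and lies in the domain where the strict entropy dissipation forces it to vanish).

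The main obstacle is the rigorous justification of these formal manipulations: the entropy identity in Lemma~\ref{lem:GRE} is stated ``provided the quantities exist,'' and the Fourier series $\sum_k c_k\cU_k$ need not converge in a naive sense. The clean way to handle this — and presumably what the paper does — is to defer the argument to the functional-analytic setting of Section~\ref{sec:L2}, working in the weighted space $L^2(x\cU(x)\,dx)$ (or its reciprocal-weight variant dictated by the adjoint eigenvectors), where the $\cU_k$ form an orthogonal family, the semigroup is well-defined, and $\cZ^{H_0}$ is exactly the closed span of $\{\cU_k\}$; invariance of this closed subspace under a strongly continuous semigroup whose generator has the $\cU_k$ as eigenvectors is then immediate. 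Thus I would phrase the proof as: reduce to $H_0=|\cdot|^2$ by $H$-independence of $\cZ^H$; identify $\cZ^{H_0}$ with $\overline{\mathrm{span}}\{\cU_k\}_{k\in\Z}$; and invoke invariance of this eigenspace under the flow, which is the content of the balance laws~\eqref{eq:balance}.
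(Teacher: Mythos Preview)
Your approach is correct but genuinely different from the paper's. You reduce to the characterization $u(x)/\cU(x)=u(2x)/\cU(2x)$, then argue via Fourier decomposition onto the eigenvectors $\cU_k$ and invariance of $\overline{\mathrm{span}}\{\cU_k\}$ under the semigroup, relying on the Hilbert machinery of Section~\ref{sec:L2}. The paper instead gives a direct PDE argument that does not need the $L^2$ framework or any series expansion: starting from $u_0$ with $u_0(x)/\cU(x)=u_0(2x)/\cU(2x)$, it observes that for any solution with this property the ratio $v=e^{-t}u/\cU$ satisfies the pure transport equation $\partial_t v + x\partial_x v=0$, hence writes down the explicit candidate
\[
u_1(t,x)=u_0(xe^{-t})\,\frac{\cU(x)\,e^t}{\cU(xe^{-t})},
\]
checks that $u_1$ solves~\eqref{eq:main} and satisfies $u_1(t,x)/\cU(x)=u_1(t,2x)/\cU(2x)$ for all $t$, and concludes $u\equiv u_1$ by uniqueness. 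This buys a self-contained proof at the level of Lemma~\ref{lm:nonhypocoercivity}, requiring only well-posedness and no spectral information. Your route, by contrast, is essentially the content of Lemma~\ref{lm:Xinv} later in the paper (which the authors themselves describe as recovering Lemma~\ref{lm:nonhypocoercivity} ``in a more rigorous and more precise way''); it has the advantage of yielding the explicit periodic structure of the solution, at the cost of postponing the argument until the Hilbert setting is in place. Your ``alternative route'' in the middle paragraph is not a complete argument as written: zero derivative at $t=0$ does not by itself propagate, and you end up falling back on the eigenspace invariance anyway.
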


As Lemma~\ref{lem:GRE}, Lemma~\ref{lm:nonhypocoercivity} is valid in a space where the existence and uniqueness of a solution is proved, as for instance in the space $L^2(\R_+,x/\cU(x)dx),$ see Section~\ref{sec:L2}.
\begin{proof}
Let $u^{\rm{in}}$ such that $D^H[u^{\rm{in}}]=0$ and denote $u(t,\cdot)$ the solution to Equation~\eqref{eq:main}.

We have already seen in Lemma~\ref{lem:GRE} that for any $u:\R_+\to\C,$ we have
\[D^H[u]=0\qquad\Longleftrightarrow\qquad \f{u(x)}{\cU(x)}=\f{u(2x)}{\cU(2x)},\quad \text{a.e. }\ x >0,\]
so that by assumption $\f{u^{\rm{in}}(x)}{\cU(x)}=\f{u^{\rm{in}}(2x)}{\cU(2x)}$ for almost every $x>0.$ 

To prove
 Lemma~\ref{lm:nonhypocoercivity} we thus want to prove that $\f{u(t,x)}{\cU(x)}=\f{u(t,2x)}{\cU(2x)}$ for almost every $x>0,$ $t>0.$ To do so, we notice that if we have a solution $\tilde u$ of Equation~\eqref{eq:main} which satisfies this property, then the ration $v(t,x)=\tilde u(t,x)/\cU (x) e^{-t}$ is solution of the following simple transport equation
\[\p_t v(t,x)+x\p_xv(t,x)=0,\]
so that $v(t,x)=v(0,xe^{-t}).$
We are led to define a function $u_1$ by
\[
u_1(t,x):=u^{\rm{in}}(xe^{-t})\frac{\cU(x)\,e^t}{\cU(xe^{-t})}.
\]
We easily check that $u_1(t,x)/\cU(x)=u_1(t,2x)/\cU(2x)$ for all $t$ and almost all $x,$ and that $u_1$  is solution to Equation~\eqref{eq:main}. We conclude  by uniqueness that we have $u\equiv u_1$ and so $D^H[u(t,\cdot)]=0$ for all $t\geq 0.$

\end{proof}

\

For $H(z)=|z|^p$ the entropy corresponds to the $p$-power of the norm in
\[E_p:=L^p(\R_+,\phi(x)\cU^{1-p}(x)\,dx).\]
Define also the space
\[E_\infty:=\bigl\{u:\R_+\to\C\ \text{measurable},\,\exists C>0,\,|u|\leq C\cU\ a.e.\bigr\},\]
which is the analogous of $E_p$ for $p=\infty,$
endowed with the norm
\[\|u\|_{E_\infty}:=\supess_{x>0}\frac{|u(x)|}{\cU(x)}.\]
These spaces have the property to be invariant under the dynamics of Equation~\eqref{eq:main} and to constitute a tower of continuous inclusions, as it is made more precise in the following two lemmas.

\begin{lemma}\label{lm:E_p-stability}
Let $p\in[1,\infty]$ and let $u(t,x)$ be the solution to Equation~\eqref{eq:main} with initial data $u^{\rm{in}}\in E_p.$
Then $u(t,\cdot)\in E_p$ for all $t\geq0$ and
\[\|u(t,\cdot)e^{-t}\|_{E_p}\leq\|u^{\rm{in}}\|_{E_p}.\]
\end{lemma}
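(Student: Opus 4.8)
The plan is to prove the $E_p$-stability and the contraction estimate $\|u(t,\cdot)e^{-t}\|_{E_p}\le\|u_0\|_{E_p}$ directly from the General Relative Entropy inequality of Lemma~\ref{lem:GRE}, treating the cases $p\in[1,\infty)$ and $p=\infty$ separately. For $p\in[1,\infty)$, I would apply Lemma~\ref{lem:GRE} with the convex function $H(z)=|z|^p$. Since $D^H\ge 0$, the entropy functional $t\mapsto\int_0^\infty x\,\cU(x)\,H\big(u(t,x)\cU(x)^{-1}e^{-t}\big)\,dx$ is nonincreasing, and by definition of $H$ this functional is exactly $\|u(t,\cdot)e^{-t}\|_{E_p}^p$. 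Hence $\|u(t,\cdot)e^{-t}\|_{E_p}^p\le\|u_0\|_{E_p}^p<\infty$, which gives both that $u(t,\cdot)\in E_p$ and the desired contraction, after taking $p$-th roots. The same argument works to show $u(t,\cdot)$ stays in $E_p$ provided $u_0\in E_p$.

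For $p=\infty$ the entropy argument does not apply literally, so I would instead use the explicit representation idea already appearing in the proof of Lemma~\ref{lm:nonhypocoercivity}, combined with the semigroup/comparison structure. One clean route: observe that if $u_0\in E_\infty$ with $|u_0|\le C\cU$ a.e., one wants to show $|u(t,x)|\le C\cU(x)e^t$ a.e. For this I would either (i) pass to the limit $p\to\infty$ in the $E_p$ estimate, using that $\cU$ decays faster than any polynomial and that $\phi(x)\cU^{1-p}(x)=x\,\cU(x)(\cU(x)/x)^{-p}$ — but this requires care since the measures depend on $p$ — or more robustly (ii) use positivity of the semigroup: since the equation preserves positivity and $C\cU(x)e^t$ is itself a (super)solution (indeed $e^t\cU$ solves the equation because $\lambda=1$), the comparison principle for the growth-fragmentation semigroup gives $|u(t,x)|\le C e^t\cU(x)$ directly once one controls $|u|$ by a solution with data $C\cU\pm\mathrm{Re}\,u_0$, etc. I would favor route (ii), invoking the monotonicity of the semigroup established in Section~\ref{sec:L2}.

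The main obstacle is the $p=\infty$ case: the entropy method genuinely breaks down there, and making the comparison argument rigorous requires the well-posedness and positivity framework of Section~\ref{sec:L2}, so strictly speaking this lemma — like Lemmas~\ref{lem:GRE} and~\ref{lm:nonhypocoercivity} — should be read as valid in that setting. A secondary technical point is that for $p\in[1,\infty)$ one must check that the entropy functional is finite at $t=0$ exactly when $u_0\in E_p$, and that its time-derivative identity from Lemma~\ref{lem:GRE} is justified (again deferred to the existence theory), but once that is granted the estimate is immediate. I would remark that the contraction is in fact saturated precisely on the eigenfunctions $\cU_k$ and their combinations, consistent with Lemma~\ref{lm:nonhypocoercivity}, which is why no strict decay — hence no spectral gap — can be expected.
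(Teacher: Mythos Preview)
Your treatment of the case $p\in[1,\infty)$ is exactly the paper's: apply Lemma~\ref{lem:GRE} with $H(z)=|z|^p$ and read off the contraction of $\|u(t,\cdot)e^{-t}\|_{E_p}^p$.

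For $p=\infty$, however, your assertion that ``the entropy method genuinely breaks down'' is not correct, and this is precisely where your proposal diverges from the paper. The paper stays within the entropy framework by choosing the convex function
\[
H(z)=\begin{cases}|z|-C&\text{if }|z|\geq C,\\ 0&\text{if }|z|\leq C,\end{cases}
\qquad C=\|u_0\|_{E_\infty}.
\]
Since $H\!\left(\dfrac{u_0}{\cU}\right)=0$ a.e., Lemma~\ref{lem:GRE} forces $H\!\left(\dfrac{u(t,\cdot)e^{-t}}{\cU}\right)=0$ for all $t\geq0$, i.e.\ $|u(t,x)|\leq C\,\cU(x)e^t$. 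This is a one-line argument that keeps all cases $p\in[1,\infty]$ under the same roof, and it requires no positivity or comparison principle.

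Your route~(ii) via positivity of the semigroup is a legitimate alternative, but note two things. First, it imports an extra ingredient (positivity, taken from~\cite{EscoMischler3} in Section~\ref{subsec:well-posedness}) that the entropy proof does not need. Second, for complex-valued $u_0$ the naive comparison on real and imaginary parts separately only yields $|u(t,\cdot)|\leq\sqrt{2}\,C\,\cU e^t$; to recover the sharp constant you must argue, e.g., that $\mathrm{Re}(e^{i\theta}u_0)\leq C\,\cU$ for every $\theta$ and then optimise over $\theta$. Your ``etc.'' hides this step. Route~(i) (letting $p\to\infty$) is indeed delicate for the reason you mention and is best avoided.
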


\begin{proof}
For $p<\infty,$ this is a direct consequence of Lemma~\ref{lem:GRE} by considering the convex function $H(z)=|z|^p.$
Similarly for $p=\infty$ we get the result by applying Lemma~\ref{lem:GRE} with the convex function
\[H(z)=\left\{\begin{array}{ll}
|z|-C&\text{if}\ |z|\geq C
\vspace{2mm}\\
0&\text{if}\ |z|\leq C
\end{array}\right.\]
with $C=\|u^{\rm{in}}\|_{E_\infty}.$
\end{proof}

\begin{lemma}
Let $1\leq p\leq q\leq\infty$ and $u\in E_q.$
Then $u\in E_p$ and
\[\|u\|_{E_p}\leq\|u\|_{E_q}. \]
\end{lemma}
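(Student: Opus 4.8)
The plan is to reduce the statement to the classical nesting of $L^p$ spaces over a probability space. First I would rewrite both norms in terms of the ratio $v:=u/\cU$. For $p\in[1,\infty)$ and any measurable $u:\R_+\to\C$, using $\phi(x)=x$,
\[\|u\|_{E_p}^p=\int_0^\infty|u(x)|^p\,x\,\cU(x)^{1-p}\,dx=\int_0^\infty\Bigl|\tfrac{u(x)}{\cU(x)}\Bigr|^p\,x\,\cU(x)\,dx,\]
and similarly $\|u\|_{E_\infty}=\supess_{x>0}|u(x)/\cU(x)|$. Since $\cU>0$ a.e. on $(0,\infty)$ by Theorem~\ref{th:Perron}, the function $v$ is well defined, and the normalisation $\int_0^\infty x\,\cU(x)\,dx=1$ means that $d\mu(x):=x\,\cU(x)\,dx$ is a probability measure on $\R_+$, mutually absolutely continuous with Lebesgue measure since $x\cU(x)>0$ a.e. Hence $\|u\|_{E_p}=\|v\|_{L^p(d\mu)}$ for every $p\in[1,\infty]$, and $u\in E_p$ if and only if $v\in L^p(d\mu)$.

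It then suffices to show that $\|v\|_{L^p(d\mu)}\le\|v\|_{L^q(d\mu)}$ whenever $1\le p\le q\le\infty$, for an arbitrary measurable $v$ (allowing the right-hand side to be $+\infty$). For $q=\infty$ this is immediate, since $|v|\le\|v\|_{L^\infty(d\mu)}$ holds $\mu$-a.e. and $\mu(\R_+)=1$. For $q<\infty$ I would apply Jensen's inequality with the convex function $t\mapsto t^{q/p}$ against the probability measure $\mu$,
\[\Bigl(\int_0^\infty|v|^p\,d\mu\Bigr)^{q/p}\le\int_0^\infty|v|^q\,d\mu,\]
and take $q$-th roots; alternatively one invokes Hölder's inequality with conjugate exponents $q/p$ and $q/(q-p)$ together with $\mu(\R_+)=1$.

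Translating back through $v=u/\cU$ yields $u\in E_p$ whenever $u\in E_q$, with $\|u\|_{E_p}\le\|u\|_{E_q}$, which is the claim. There is no real obstacle here; the only points that need a word of care will be the strict positivity of $\cU$ (so that $u/\cU$ makes pointwise sense) and the separate, trivial treatment of the endpoint $q=\infty$.
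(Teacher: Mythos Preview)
Your proof is correct and is essentially the same as the paper's: both rely on H\"older's inequality with exponents $q/p$ and $q/(q-p)$ together with the normalisation $\int_0^\infty x\,\cU(x)\,dx=1$. Your substitution $v=u/\cU$ simply makes explicit the probability-measure structure that the paper's direct H\"older computation uses implicitly.
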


\begin{proof}
It is clear if $q=+\infty.$
For $q<+\infty,$ since $\phi(x)\,\cU(x)dx$ is a probability measure the Jensen's inequality ensures that
\[\|u\|_{E_p}^q=\bigg(\int \Big|\frac{u}{\cU}\Big|^p\phi\,\cU\bigg)^{q/p}\leq\int \Big|\frac{u}{\cU}\Big|^q\phi\,\cU=\|u\|_{E_q}^q.\]
\end{proof}

\section{Convergence in the quadratic norm}\label{sec:L2}
Equipped with the  General Relative Entropy inequalities, we  now combine them with Hilbert space techniques to prove the convergence to periodic solutions.
The Hilbert space formalism provides an interpretation of the periodic limit in terms of Fourier decomposition, and allows us to give the main ingredients of the proof while avoiding too many technicalities.
We first introduce the Hilbert space (Section~\ref{subsec:Hilbert}), in which we prove the well-posedness of Equation~\eqref{eq:main} (Section~\ref{subsec:well-posedness}). We state our main result in Theorem~\ref{theo:asympto:GRE}.

\subsection{The Hilbert space}\label{subsec:Hilbert}

As we will see below, working in a Hilbert setting is very convenient for our study.
Drawing inspiration from the General Relative Entropy with the convex quadratic function $H(z)=|z|^2,$
we work in the Hilbert space
\[E_2=L^2(\R_+,x/\cU(x)\,dx)\]
endowed with the inner product
\[(f,g):=\int_0^\infty f(x)\overline g(x)\frac{x}{\cU(x)}\,dx.\]
We denote by $\|\cdot\|$ the corresponding norm defined by
\[\|f\|^2=(f,f).\]
In this space, the normalization we have chosen for $\cU$ means
\[\|\cU\|=\|\cU_k\|=1\]
and the biorthogonality property~\eqref{eq:biorthogonal} reads
\[(\cU_k,\cU_l)=\langle\cU_k,\phi_l\rangle=\delta_{k,l},\]
meaning that $(\cU_k)_{k\in\Z}$ is an orthonormal family in $E_2.$
As a consequence the family $(\cU_k)_{k\in\Z}$ is a Hilbert basis of the Hilbert space
\[X:=\overline{\mbox{span}}(\cU_{k})_{k\in \Z}\]
and the orthogonal projection on this closed subspace of $E_2$ is given by
\[Pu:=\sum_{k=-\infty}^{+\infty}(u,\cU_k)\cU_k,\qquad\forall u\in E_2.\]
Additionally, we have the Bessel's inequality
\[
\|Pu\|^2=\sum_{k=-\infty}^{+\infty}|(u,\cU_k)|^2\leq\|u\|^2.
\]
As it is stated in the following lemma, there is a crucial link between $X$ and the quadratic dissipation of entropy ({\it i.e.} $D^H[u]$ for $H(z)=|z|^2$), which can be written in a simpler way as
\beq\label{def:D2}
D^2[u]=\int_0^\infty xB(x)\,\cU(x)\bigg|\frac{u(x)}{\cU(x)}-\frac{u(x/2)}{\cU(x/2)}\bigg|^2\,dx.
\eeq

\begin{lemma}\label{lm:X}
We have
\[X=\{u\in E_2,\ D^2[u]=0\}.\]
\end{lemma}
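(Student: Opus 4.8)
The plan is to prove the two inclusions separately. The inclusion $X \subseteq \{u \in E_2 : D^2[u]=0\}$ is the easy direction: each basis element $\cU_k$ satisfies $D^2[\cU_k]=0$ by Lemma~\ref{lem:GRE} (recalling $\cU_k(x)/\cU(x) = x^{-2ik\pi/\log 2}$ is a $\log 2$-periodic function of $\log x$, hence invariant under $x \mapsto 2x$), so it suffices to check that $u \mapsto D^2[u]$ is continuous on $E_2$ — or at least that $\{D^2 = 0\}$ is a closed linear subspace — and then pass to the closed span. Linearity in the relevant sense is not quite automatic since $D^2$ is quadratic; the cleaner route is to recognise $D^2[u] = \|Tu\|_{L^2(xB(x)\cU(x)dx)}^2$ where $T$ is the linear operator $(Tu)(x) = u(x)/\cU(x) - u(x/2)/\cU(x/2)$, so that $\{D^2[u]=0\} = \ker T$ is automatically a closed subspace (once $T$ is shown bounded, or closable, from $E_2$ into that weighted $L^2$ space), and it contains every $\cU_k$, hence contains $X = \overline{\mathrm{span}}(\cU_k)$.

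For the reverse inclusion $\{u \in E_2 : D^2[u]=0\} \subseteq X$, I would argue as follows. Suppose $u \in E_2$ with $D^2[u]=0$. By Lemma~\ref{lem:GRE} (the strict-convexity case, $H(z)=|z|^2$), this forces $u(x)/\cU(x) = u(2x)/\cU(2x)$ for a.e. $x>0$; equivalently, writing $f(y) := u(e^y)/\cU(e^y)$, the function $f$ is $\log 2$-periodic on $\R$. The substitution $y = \log x$ turns the statement $u \in E_2$, i.e. $\int_0^\infty |u(x)|^2 x/\cU(x)\,dx < \infty$, into $\int_{\R} |f(y)|^2\, w(y)\,dy < \infty$ for the weight $w(y) = e^{2y}\cU(e^y)$; since $f$ is $\log 2$-periodic, this is finite iff $\int_0^{\log 2} |f(y)|^2 \big(\sum_{n\in\Z} w(y+n\log 2)\big) dy < \infty$, and the periodised weight is, up to positive constants bounded above and below on $[0,\log 2]$ (using $x^\alpha \cU \in L^\infty$ for all $\alpha$ from Theorem~\ref{th:Perron} to control the tails of the sum), finite a.e. So $f \in L^2_{\mathrm{per}}(0,\log 2)$. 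Now expand $f$ in its Fourier series on the circle of circumference $\log 2$: $f(y) = \sum_{k\in\Z} c_k\, e^{2ik\pi y/\log 2}$ in $L^2_{\mathrm{per}}$. Translating back, $e^{2ik\pi y/\log 2} = x^{2ik\pi/\log 2}$ corresponds to $\cU_{-k}(x)/\cU(x)$, so $u = \sum_k c_k \cU_{-k}$ with convergence in $E_2$ (the $E_2$-norm of $\sum c_k \cU_{-k}$ being comparable to the $L^2_{\mathrm{per}}$ norm of the Fourier series via the weight computation above — here one must be a little careful and note that since $(\cU_k)$ is \emph{orthonormal} in $E_2$, one in fact has $\|\sum c_k\cU_{-k}\|^2 = \sum|c_k|^2$ exactly, matching Parseval on the circle). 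Hence $u \in \overline{\mathrm{span}}(\cU_k) = X$.

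The main obstacle is the measure-theoretic bookkeeping in the change of variables $x = e^y$ and the verification that $\log 2$-periodicity together with $u \in E_2$ genuinely places $f = u/\cU \circ \exp$ in $L^2$ of one period — i.e. that the periodised weight $\sum_{n} e^{2(y+n\log 2)}\cU(e^{y+n\log 2})$ is almost everywhere finite and behaves well. This is where the full strength of Theorem~\ref{th:Perron} ($x^\alpha \cU \in L^\infty$ for all real $\alpha$, giving rapid decay of $\cU$ at $0$ and at $\infty$) is used to make the two-sided tail of the sum converge and stay bounded on a period. Everything else — identifying $\{D^2=0\}$ with the periodicity condition (already done in Lemma~\ref{lem:GRE}), recognising $\cU_k/\cU$ as characters, and invoking orthonormality of $(\cU_k)_{k\in\Z}$ in $E_2$ to get an isometry onto $\ell^2(\Z)$ — is routine. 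One subtlety worth a sentence: strictly, Lemma~\ref{lem:GRE} is stated "provided the quantities exist", so in the closed-subspace step one should either note $D^2$ is well-defined (possibly $+\infty$) for every $u \in E_2$, or simply work with $T$ as above where no such caveat is needed.
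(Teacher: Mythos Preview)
Your proposal is correct and follows essentially the same route as the paper: identify $\{D^2=0\}$ with the set of $u$ such that $u/\cU$ is $\log 2$-periodic in $\log x$, then Fourier-expand on one period to land in $\overline{\mathrm{span}}(\cU_k)$. You are more careful than the paper on two points the authors leave implicit --- closedness of $\{D^2=0\}$ for the inclusion $X\subset\{D^2=0\}$, and the periodised-weight argument for why $f\in L^2(0,\log 2)$ --- but these are refinements of the same proof, not a different approach; in fact the orthonormality of $(\cU_k)$ in $E_2$ forces the periodised weight $\sum_n e^{2(y+n\log 2)}\cU(e^{y+n\log 2})$ to be the constant $1/\log 2$, which streamlines your weight estimate.
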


\begin{proof}
Since $|z|^2$ is strictly convex, we have already seen in Lemma~\ref{lem:GRE} (and it is even clearer in the case of $D^2$) that
\[\{u\in E_2,\ D^2[u]=0\}=\{u\in E_2,\ u(x)/\cU(x)=u(2x)/\cU(2x),\ a.e.\ x>0\}\supset X.\]
Also we clearly have
\begin{align*}
\{u\in E_2,&\ u(x)/\cU(x)=u(2x)/\cU(2x),\ a.e.\ x>0\}=\\
&\{u\in E_2,\,\exists f:\R\to\C\ \log2\text{-periodic},\,u(x)=f(\log x)\cU(x),\ a.e.\ x>0 \}.
\end{align*}
If $u\in E_2$ is of the form $u(x)=f(\log x)\cU(x)$ with $f:\R\to\C$ $\log2$-periodic then necessarily $f\in L^2([0,\log2])$
and the Fourier theory ensures (Fourier-Riesz-Fischer theorem) that
\[f(y)=\sum_{k=-\infty}^{+\infty}\hat f(k)e^{\frac{2ik\pi y}{\log2}},\]
where
\[\hat f(k)=\frac{1}{\log2}\int_{0}^{\log2}f(y)e^{-\frac{2ik\pi y}{\log2}}dy\in \ell^2(\Z).\]
So we have in $L^2_{loc}(0,\infty)$
\[u(x)=\cU(x)\sum_{k=-\infty}^{+\infty}\hat f(k)x^{\frac{2ik\pi}{\log2}}=\sum_{k=-\infty}^{+\infty}\hat f(-k)\cU_k(x)\in X.\]
We also deduce that $\hat f(k)=(u,\cU_{-k}).$
\end{proof}

\subsection{Well-posedness of the Cauchy problem}
\label{subsec:well-posedness}
Since the Perron eigenvalue $\lambda=1$ is strictly positive, it is convenient to consider a \emph{rescaled} version of our problem
\begin{equation}\label{eq:rescaled}\left\{\begin{array}{l}
\dis\f{\partial}{\partial t}  v(t,x) + \dfrac{\partial}{
\partial x} \big(x v(t,x)\big) + v(t,x) + B(x) v(t,x) =4 B(2x)v(t,2x),\quad x>0,
\\ \\
v(0,x)=u^{\rm{in}}(x).\end{array}\right.
\end{equation} 

The solutions to Equation~\eqref{eq:main} are related to the solutions to~\eqref{eq:rescaled} by the simple relation
\[u(t,x)=e^tv(t,x).\]
It is proved in~\cite{EscoMischler3} (see also~\cite{BernardGabriel_2}) that the problem~\eqref{eq:rescaled} is well-posed in $E_1$
and admits an associated $C_0$-semigroup $(T_t)_{t\geq0}$ which is positive,
meaning that for any $u^{\rm{in}}\in E_1$ there exists a unique (mild) solution $v\in C(\R_+,E_1)$ to~\eqref{eq:rescaled} which is given by $v(t)=T_tu^{\rm{in}},$
and $v(t)\geq0,$ $t\geq0,$ for $ u^{\rm{in}}\geq0.$
From Lemma~\ref{lm:E_p-stability} we have that all subspaces $E_p$ with $p\in[1,\infty]$ are invariant under $(T_t)_{t\geq0}.$
Additionally, the restriction of $T_t$ to any $E_p$ is a contraction, {\it i.e.}
\beq\label{eq:contraction}
\forall u\in E_p,\ \forall t\geq0,\qquad \|T_tu\|_{E_p}\leq\|u\|_{E_p}.
\eeq
To get the well-posedness of~\eqref{eq:rescaled} in $E_2,$ it only remains to check the strong continuity of $(T_t)_{t\geq0}$ in $E_2.$

\begin{lemma}
The semigroup $(T_t)_{t\geq0}$ restricted to $E_2$ is strongly continuous.
\end{lemma}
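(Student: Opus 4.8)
The plan is to exploit the fact that strong continuity of a semigroup of contractions can be checked on a dense subset, combined with the tower of inclusions $E_\infty\hookrightarrow E_2\hookrightarrow E_1$ established above. Since $(T_t)_{t\ge0}$ is already known to be a $C_0$-semigroup on $E_1$ and each $T_t$ restricts to a contraction on $E_2$ by \eqref{eq:contraction}, it suffices to show that $t\mapsto T_tu$ is continuous at $t=0$ in the $E_2$-norm for all $u$ in a dense subspace of $E_2$; the contraction property then propagates continuity to all of $E_2$ by a standard $3\varepsilon$-argument, and continuity at $t=0$ upgrades to continuity on all of $\R_+$ by the semigroup property together with uniform boundedness of the $T_t$.

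First I would identify a convenient dense subspace, the natural candidate being $E_\infty$ (or $E_\infty\cap E_1$), which is dense in $E_2$: indeed, given $u\in E_2$, the truncations $u_C:=u\,\ind{|u|\le C\cU}$ satisfy $|u_C|\le C\cU$, hence $u_C\in E_\infty$, and $\|u-u_C\|\to0$ as $C\to\infty$ by dominated convergence in $E_2$. For $u\in E_\infty$ with $\|u\|_{E_\infty}=C$, Lemma~\ref{lm:E_p-stability} gives $\|T_tu\|_{E_\infty}\le C$ for all $t\ge0$, so the family $\{T_tu-u : t\ge0\}$ is dominated in modulus by $2C\cU$, an element of $E_2$ since $\|\cU\|=1$. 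On the other hand, $T_tu\to u$ in $E_1$ as $t\to0$ by strong continuity on $E_1$, hence (along a subsequence) $T_tu(x)\to u(x)$ for a.e. $x$; the almost-everywhere convergence actually holds along the full net because of the uniform $E_\infty$ bound and a diagonal argument, or one simply extracts subsequences. Dominated convergence in $E_2$ — with dominating function $2C\cU$ — then yields $\|T_tu-u\|\to0$.

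I then pass from the dense subspace to all of $E_2$: for $u\in E_2$ and $\varepsilon>0$, choose $w\in E_\infty$ with $\|u-w\|<\varepsilon$; using the $E_2$-contraction \eqref{eq:contraction},
\[
\|T_tu-u\|\le\|T_t(u-w)\|+\|T_tw-w\|+\|w-u\|\le 2\varepsilon+\|T_tw-w\|,
\]
and the middle term tends to $0$ as $t\to0$ by the previous step, so $\limsup_{t\to0}\|T_tu-u\|\le2\varepsilon$; since $\varepsilon$ is arbitrary, $T_tu\to u$. Continuity at an arbitrary $t_0>0$ follows from $\|T_{t_0+h}u-T_{t_0}u\|=\|T_{t_0}(T_hu-u)\|\le\|T_hu-u\|$ for $h\ge0$ and similarly $\|T_{t_0-h}u-T_{t_0}u\|\le\|T_{t_0-h}\|\,\|u-T_hu\|\le\|u-T_hu\|$ for $0\le h\le t_0$.

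The main obstacle is the passage from $E_1$-convergence to $E_2$-convergence: a priori $T_tu\to u$ only in the weaker $E_1$ topology, and one must rule out loss of mass ``at infinity'' in the heavier $E_2$ weight. This is exactly where the $E_\infty$ bound from Lemma~\ref{lm:E_p-stability} is doing the real work — it provides the uniform integrable majorant $2\|u\|_{E_\infty}\,\cU$ that legitimises dominated convergence in $E_2$ — so the whole argument hinges on having chosen the dense subspace inside $E_\infty$ rather than merely in $E_2$.
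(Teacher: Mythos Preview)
Your overall architecture matches the paper exactly: work on the dense subspace $E_\infty$, prove $\|T_tu-u\|\to0$ there, then extend to all of $E_2$ via the contraction property~\eqref{eq:contraction}. The only difference is in how the core step on $E_\infty$ is handled. Where you pass through a.e.\ convergence of subsequences and dominated convergence, the paper avoids any extraction with the direct interpolation-type bound
\[
\|T_tu-u\|_{E_2}^2=\int_0^\infty|T_tu-u|^2\,\frac{x}{\cU}\,dx\leq 2\|u\|_{E_\infty}\int_0^\infty|T_tu-u|\,x\,dx=2\|u\|_{E_\infty}\|T_tu-u\|_{E_1},
\]
using $|T_tu-u|/\cU\leq2\|u\|_{E_\infty}$ to peel off one factor. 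This yields $E_2$-convergence in one line from the $E_1$-continuity and is worth knowing as the cleaner alternative.

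One remark on your version: the claim that ``the almost-everywhere convergence actually holds along the full net because of the uniform $E_\infty$ bound and a diagonal argument'' is not correct as stated---uniform pointwise bounds do not upgrade subsequential a.e.\ convergence to convergence along the whole family. Your fallback ``or one simply extracts subsequences'' is the right repair, but you should close it with the standard sub-subsequence principle: for any $t_n\to0$, every subsequence has (via $E_1$-convergence) a further subsequence converging a.e., hence in $E_2$ by dominated convergence with majorant $2\|u\|_{E_\infty}\cU$; since all such limits equal $u$, the full sequence converges in $E_2$. With that clarification your argument is complete.
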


\begin{proof}

We use the subspace $E_\infty\subset E_2$ and the contraction property~\eqref{eq:contraction} to write
for any $u\in E_\infty$
\begin{align*}
\|T_tu-u\|_{E_2}^2=\int_0^\infty &|T_tu-u|^2(x)\frac{x}{\cU(x)}\,dx\\
&\leq2\|u\|_{E_\infty}\int_0^\infty|T_tu-u|(x)x\,dx=2\|u\|_{E_\infty}\|T_tu-u\|_{E_1}.
\end{align*}
The strong continuity of $(T_t)_{t\geq0}$ in $E_1$ ensures that $\|T_tu-u\|_{E_1}\to0$ and so $\|T_tu-u\|_{E_2}\to0$ when $t\to0.$
By density of $E_\infty\subset E_2$ we get the strong continuity of $(T_t)_{t\geq0}$ in $E_2.$

\end{proof}

We denote by $\cA$ the generator of the semigroup $(T_t)_{t\geq0}$ in $E_2.$
For any $u$ in the domain $D(\cA)$ we have in the distributional sense
\[\cA u(x)=-(xu(x))'-u(x)-B(x)u(x)+4B(2x)u(2x).\]
The eigenpairs $(\lambda_k,\cU_k)$ are defined by $\cA\,\cU_k=\text{$(\lambda_k-1)$}\,\cU_k$ and we easily prove the following properties.
\begin{proposition}\label{prop:Ttproperties}
For all $t\geq0$ we have
\begin{enumerate}
\item $\forall k\in\Z,\quad T_t\,\cU_k=e^{\frac{2ik\pi t}{\log2}}\,\cU_k,$
\item $\forall u\in E_2,\ \forall k\in\Z,\quad(T_tu,\cU_k)=(u,\cU_k)e^{\frac{2ik\pi t}{\log2}},$
\item $\forall u\in E_2,\quad PT_tu=T_tPu=\sum_{k\in\Z}(u,\cU_k)e^{\frac{2ik\pi t}{\log2}}\cU_k,$
\item $T_t X\subset X$ and for all $u\in X,\quad T_tu=\sum_{k\in\Z}(u,\cU_k)e^{\frac{2ik\pi t}{\log2}}\cU_k,$
\item $X\subset D(\cA)$ and for all $u\in X,\quad \cA u=\sum_{k\in\Z}(u,\cU_k)\frac{2ik\pi}{\log2}\cU_k.$
\end{enumerate}
\end{proposition}
The second property is nothing but a rewriting of the conservation laws~\eqref{eq:balance}.
The fourth point makes more precise and proves more rigorously the invariance property in Lemma~\ref{lm:nonhypocoercivity}.

\subsection{Convergence}
We are now ready to state the asymptotic behaviour of solutions to Problem~\ref{eq:main}.
\begin{theorem}\label{theo:asympto:GRE}
Assume that $B$ satisfies Hypothesis~\eqref{hyp:B} and define $\cU_k$ by~\eqref{def:vepk}.
Then for any $ u^{\rm{in}} \in E_2,$ the unique solution $u(t,x) \in C\big(\R_+,E_2\big)$ to Equation~\eqref{eq:main} satisfies
\[\int\limits_0^\infty\, \bigg|u(t,x)e^{-t} - \sum_{k=-\infty}^{+\infty}( u^{\rm{in}},\cU_k) e^{\frac{2ik\pi}{\log2}t}\,\cU_k(x) \bigg|^2\frac{x\,dx}{\cU(x)} \xrightarrow[t\to+\infty]{}0.\]
\end{theorem}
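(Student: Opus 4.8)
The plan is to combine the General Relative Entropy inequality with the decomposition $E_2 = X \oplus X^\perp$ into the space of null-dissipation states and its orthogonal complement. Writing $v(t) = T_t u_0 = u(t,\cdot)e^{-t}$, I would first split $v(t) = Pv(t) + (I-P)v(t)$. For the component in $X$, Lemma~\ref{lm:Xinv} together with the conservation laws \eqref{eq:balanceTt} gives the \emph{exact} identity $P v(t) = \sum_k (u_0,\cU_k) e^{\frac{2ik\pi}{\log2}t}\cU_k$, which is precisely the claimed periodic limit. So the whole theorem reduces to showing that the orthogonal part vanishes:
\[
\|(I-P)v(t)\| = \|(I-P)T_t u_0\| \xrightarrow[t\to+\infty]{} 0.
\]

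The core of the argument is an entropy/compactness scheme. First, by Lemma~\ref{lm:E_p-stability} (case $p=2$), $t\mapsto \|v(t)\|$ is nonincreasing, hence converges to some $\ell\geq 0$; and integrating the entropy identity $\frac{d}{dt}\|v(t)\|^2 = -D^2[v(t)]$ from Lemma~\ref{lem:GRE} shows $\int_0^\infty D^2[v(t)]\,dt<\infty$. Since $\|Pv(t)\| = \|Pu_0\|$ is constant (because $Pv(t)$ is a unitary rotation of $Pu_0$ in each coordinate), Pythagoras gives $\|(I-P)v(t)\|^2 = \|v(t)\|^2 - \|Pu_0\|^2 \to \ell^2 - \|Pu_0\|^2 =: m^2$, so it suffices to prove $m=0$. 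The standard way is: take a sequence $t_n\to\infty$; using the finiteness of $\int_0^\infty D^2[v(t)]\,dt$ one can find $t_n$ along which $D^2[v(t_n))]\to 0$; one then needs a \emph{compactness} statement — that $\{v(t_n)\}$, or rather $\{(I-P)v(t_n)\}$, is relatively compact in $E_2$ (or at least in $L^2_{loc}$), so that a subsequence converges to some $w$. By lower semicontinuity of $D^2$ one gets $D^2[w]=0$, i.e. $w\in X$ by Lemma~\ref{lm:X}; but $w$ is also a limit of elements of $X^\perp$, hence $w\in X^\perp$, forcing $w=0$, hence $\|(I-P)v(t_n)\|\to 0$ along the subsequence, hence $m=0$ by the already-established convergence of the whole net. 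Alternatively, and perhaps more cleanly here, one can use the structure of $\cA$: the semigroup $(T_t)$ restricted to $X^\perp$ has generator $\cA|_{X^\perp}$ whose spectrum lies strictly in $\{\Re\lambda<0\}$ — this would follow from the spectral analysis showing the peripheral spectrum of $\cA$ is exactly $\{\lambda_k\}$ with associated eigenspace $X$ — combined with a quasi-compactness or essential-spectrum bound for the rescaled growth-fragmentation semigroup (as in \cite{mischler:frag,EscoMischler3}), giving directly exponential decay of $\|(I-P)T_t u_0\|$.

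The hard part is exactly this last ingredient: establishing that the peripheral spectrum of $\cA$ in $E_2$ is \emph{only} the $\lambda_k$ and that nothing else accumulates on the imaginary axis — equivalently, the compactness needed to pass from ``$D^2[v(t_n)]\to 0$'' to ``$(I-P)v(t_n)\to 0$ in $E_2$''. The tower of inclusions $E_\infty\subset E_p\subset E_2\subset E_1$ and the smoothing estimate $x^\alpha\cU\in L^\infty$ from Theorem~\ref{th:Perron} should be used to gain compactness in the $x\to 0$ and $x\to\infty$ regimes, after which a local compactness argument (e.g. an Aubin–Lions-type bound on $\partial_x v$ via the equation, or the known regularizing properties of the growth-fragmentation semigroup) handles the bulk. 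I would first try to invoke the quasi-compactness results already available for this semigroup in weighted $L^1$/$L^2$ spaces, so that the entropy argument only needs to identify \emph{which} part of the peripheral spectrum survives, namely $X$, via Lemma~\ref{lm:X}; the GRE inequality then does the rest with no rate needed. A final remark: one should check that real-valuedness is not needed anywhere — everything is done in the complex Hilbert space $E_2$, and the result for real initial data follows by taking real parts, as noted in the text.
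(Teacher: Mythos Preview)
Your overall framework matches the paper's: the decomposition $E_2=X\oplus X^\perp$, the identity $PT_tu_0=\sum_k(u_0,\cU_k)e^{\frac{2ik\pi}{\log2}t}\cU_k$, the reduction to $\|(I-P)T_tu_0\|\to0$, the entropy identity giving $\int_0^\infty D^2[v(t)]\,dt<\infty$, and the end-game ``the limit lies in $X\cap X^\perp=\{0\}$'' are exactly the paper's scheme.

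Where you diverge is the compactness step, and here you make the problem harder than it needs to be. You propose either (i) strong compactness of $\{(I-P)v(t_n)\}$ in $E_2$ via tail estimates and Aubin--Lions, or (ii) quasi-compactness/spectral-gap input from~\cite{mischler:frag,EscoMischler3} to get exponential decay on $X^\perp$. Option~(ii) is precisely what the paper flags, in its Discussion, as an \emph{open problem} (showing $0$ is a pole of~$\cA$, equivalently uniform stability on $X^\perp$); you cannot invoke it off the shelf for this equation. Option~(i) could perhaps be made to work but is technically heavy, and your ``lower semicontinuity of $D^2$'' step needs care since the weight $xB(x)\cU(x)$ in $D^2$ is not the $E_2$ weight.

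The paper sidesteps all of this with a softer argument: by density it suffices to treat $u_0\in D(\cA)$; then $q:=\partial_t h$ is itself a mild solution with initial datum $\cA h(0,\cdot)\in E_2$, so $\|\partial_t h(t,\cdot)\|$ is bounded by contraction. With $h$ and $\partial_t h$ uniformly bounded in $E_2$, Ascoli plus Banach--Alaoglu give compactness of $h_n(\cdot):=h(n+\cdot)$ in $C([0,T],E_2^{\rm w})$, i.e.\ only \emph{weak} compactness in space, which is free. One passes $\frac{h_n(t,x)}{\cU(x)}-\frac{h_n(t,2x)}{\cU(2x)}\to0$ to the limit in the sense of distributions (rather than via lower semicontinuity of $D^2$), obtaining a weak limit $g$ with $D^2[g(t,\cdot)]=0$, hence $g(t,\cdot)\in X$; since $X^\perp$ is weakly closed, also $g(t,\cdot)\in X^\perp$, so $g\equiv0$. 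No strong compactness, no spectral gap, no rate --- and none is claimed. Your plan is sound up to the compactness step; replace your strong-compactness program by this density $+$ time-regularity $+$ weak-compactness trick and the argument closes.
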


\

\begin{remark}
This convergence result can also be formulated in terms of semigroups.
Set
\[R_tu:=T_tPu=PT_tu=\sum_{k=-\infty}^\infty(u,\cU_k)e^{\frac{2ik\pi}{\log2}t}\,\cU_k.\]
This defines a semigroup,
\[R_{t+s}u=T_{t+s}Pu=T_{t+s}P^2u=T_tT_sP^2u=T_tPT_sPu=R_tR_su,\]
which is $\log2$-periodic.
The result of Theorem~\ref{theo:asympto:GRE} is equivalent to the strong convergence of $(T_t)_{t\geq0}$ to $(R_t)_{t\geq0}$, {\it i.e.}
\[\forall u\in E_2,\qquad\|T_tu-R_tu\|\xrightarrow[t\to+\infty]{}0.\]
It is also equivalent to the strong stability of $(T_t)_{t\geq0}$ in $X^\bot={\rm Ker}\,P$
\[\forall u\in X^\bot,\qquad\|T_tu\|\xrightarrow[t\to+\infty]{}0.\]
\end{remark}
\medskip

\begin{remark}
We may use the Poisson summation formula to reinterpret the limit function in terms of only $ u^{\rm{in}}(x):$  we recall that this formula states that, under proper assumptions on $f$ and its Fourier transform ${\mathcal F} f(\xi)=\int\limits_{-\infty}^{+\infty}f(y)e^{-iy\xi}dy,$ we have
$$\sum\limits_{\ell=-\infty}^{\infty} f(y+\ell a)=\sum\limits_{k=-\infty}^\infty {\mathcal F} f (\f{2\pi k}{a})e^{\f{2ik\pi y}{a}}.$$
Taking $a=\log 2,$ $f(y)= u^{\rm{in}}(e^{-y})e^{-2y},$ we apply it to the limit function taken in $y=t-\log x$ 
$$\sum\limits_{k=-\infty}^\infty  ( u^{\rm{in}},\cU_k)\cU_k (x) e^{\f{2ik\pi t}{\log 2}}= \cU(x) \sum\limits_{\ell=-\infty}^\infty 2^{-2\ell} x^2 e^{-2t}  u^{\rm{in}}(2^{-\ell} xe^{-t}).$$
This formula is reminiscent of a similar one found in~\cite{DE}, Theorem 1.3. (b), for the limit case $B$ constant. 
\end{remark}

\begin{proof}[Proof of Theorem~\ref{theo:asympto:GRE}]
We follow here the classical proof of convergence, pioneered in \cite{MMP1,MMP2}. Though the limit is now an oscillating function, this strategy may be adapted here, as shown below. 

Define
\[h(t,x):=u(t,x)e^{-t}-\sum_{k=-\infty}^{+\infty}( u^{\rm{in}},\cU_k) e^{\frac{2ik\pi}{\log2}t}\,\cU_k(x)=(I-P)T_t u^{\rm{in}}\]
which is solution to Equation~\eqref{subsec:well-posedness}.
Lemma~\ref{lm:E_p-stability} with $p=2$ ensures that
\[\f{d}{dt} \|h(t,\cdot)\| \leq 0,\]
so that it decreases through time. Since it is a nonnegative quantity, it means that it tends toward a limit $L\geq 0$ and it remains to show that $L=0.$
Let us adapt to our case the proof in  B. Perthame's book \cite[p.98]{BP}.
Because of the contraction property, it is sufficient to do so for $ u^{\rm{in}}\in D(\cA)$ which is a dense subspace of $E_2.$
Recall that for $ u^{\rm{in}}\in D(\cA)$ the solution to Equation~\eqref{eq:main} can be understood as a classical solution, $u(t,\cdot)$ belonging to $D(\cA)$ for all time.
The last property in Proposition~\ref{prop:Ttproperties} ensures that $X\subset D(\cA),$ so $h(0,\cdot)\in D(\cA).$
Define $q(t,x)=\partial_th(t,x)$ which is clearly a mild solution to Equation~\eqref{eq:main} with initial datum
\[q(t=0,x)=\cA h(t=0,x).\]
By contraction we get
\[\|q(t,\cdot)\|\leq\|\cA h(0,\cdot)\|.\]
Introduce the sequence of functions $h_n(t,\cdot)=h(t+n,\cdot).$
Since $h$ and $\p_th$ are uniformly bounded in the Hilbert space $E_2,$ the Ascoli and Banach-Alaoglu theorems ensure that $(h_n)_{n\in\N}$ is relatively compact in $C([0,T],E_2^{\rm w})$ where $E_2^{\rm w}$ is $E_2$ endowed with the weak topology.
After extracting a subsequence, still denoted $h_n,$ we have $h_n\to g$ in $C([0,T],E_2^{\rm w}).$
Additionally since $\int_0^\infty D^2[h(t,\cdot)]\,dt<+\infty,$ we have
\[\int_0^TD^2[h_n(t,\cdot)]\,dt=\int_{n}^{T+n}D^2[h(t,\cdot)]\,dt\to0.\]
and it ensures, using the definition~\eqref{def:D2} of $D^2,$ that $\frac{h_n(t,x)}{\cU(x)}-\frac{h_n(t,2x)}{\cU(2x)} \to0$ in the distributional sense.
We deduce from the convergence $h_n\to g$ that $\frac{g(t,x)}{\cU(x)}-\frac{g(t,2x)}{\cU(2x)}=0,$ and so $D^2[g(t,\cdot)]=0$ for all $t\geq0.$
By Lemma~\ref{lm:X} this means that $g(t,\cdot)\in X$ for all $t\geq0.$
But for all $n\in\N$ and all $t\geq0$ we have $h_n(t,\cdot)\in X^\bot={\rm Ker}P$ by construction of $h,$
and since $X^\bot$ is a linear subspace, the weak limit $g$ of $h_n$ also satisfies $g(t,\cdot)\in X^\bot$ for all $t\geq0.$
Finally $g(t,\cdot)\in X\cap X^\bot=\{0\}$ for all $t\geq0,$ so $g\equiv0$ and the proof is complete.
\end{proof}

The result in Theorem~\ref{theo:asympto:GRE} is in contrast to the property of asynchronous exponential growth which states that the solutions behave like $u(t,x)\sim\langle  u^{\rm{in}},\phi\rangle\,\cU(x) e^t$ when $t\to+\infty.$
This property is satisfied for a large class of growth-fragmentation equations~\cite{MMP2}, but the lack of hypocoercivity in our case prevents it to hold.
However we can deduce from Theorem~\ref{theo:asympto:GRE} a ``mean asynchronous exponential growth'' property, in line with  probabilistic results, e.g.~\cite{DHKR}.

\begin{corollary}\label{cor:mean_ergodicity}
Under Assumption~\eqref{hyp:B}, the semigroup $(T_t)_{t\geq0}$ generated by $(\cA,D(\cA))$ is \emph{mean ergodic}, {\it i.e.}
\[\forall u\in E_2,\qquad \frac1t\int_0^tT_su\,ds\xrightarrow[t\to+\infty]{}P_0u=(u,\cU)\,\cU=\langle u,\phi\rangle\,\cU.\]
\end{corollary}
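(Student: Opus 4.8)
The plan is to deduce Corollary~\ref{cor:mean_ergodicity} directly from Theorem~\ref{theo:asympto:GRE} together with the semigroup reformulation stated just after it, namely that $\|T_tu-R_tu\|\to0$ for all $u\in E_2$, where $R_tu=\sum_k(u,\cU_k)e^{\frac{2ik\pi}{\log2}t}\,\cU_k$. Since $\frac1t\int_0^t\|T_su-R_su\|\,ds\to0$ by the Cesàro convergence of a convergent-to-zero function, it suffices to compute the Cesàro mean of $R_s u$ and to check that the remaining part contributes nothing. Thus the first step is to write
\[
\frac1t\int_0^tT_su\,ds=\frac1t\int_0^tR_su\,ds+\frac1t\int_0^t(T_su-R_su)\,ds,
\]
and to bound the norm of the second term by $\frac1t\int_0^t\|T_su-R_su\|\,ds$, which tends to $0$.

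The second step is the explicit Cesàro computation of $\frac1t\int_0^tR_su\,ds$. Because $(\cU_k)_{k\in\Z}$ is an orthonormal family in $E_2$ and $\sum_k|(u,\cU_k)|^2\leq\|u\|^2$ by the Bessel inequality~\eqref{eq:Bessel}, the series defining $R_su$ converges in $E_2$ uniformly in $s$, so one may integrate term by term:
\[
\frac1t\int_0^tR_su\,ds=\sum_{k=-\infty}^{+\infty}(u,\cU_k)\Bigl(\frac1t\int_0^te^{\frac{2ik\pi}{\log2}s}\,ds\Bigr)\cU_k.
\]
For $k=0$ the bracketed factor is $1$; for $k\neq0$ it equals $\frac{\log2}{2ik\pi t}\bigl(e^{\frac{2ik\pi}{\log2}t}-1\bigr)$, whose modulus is bounded by $\frac{\log2}{k\pi t}$. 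Hence the $k\neq0$ part of the sum has squared norm at most $\frac{(\log2)^2}{\pi^2t^2}\sum_{k\neq0}\frac{|(u,\cU_k)|^2}{k^2}\leq\frac{(\log2)^2}{\pi^2t^2}\|u\|^2\to0$. Therefore $\frac1t\int_0^tR_su\,ds\to(u,\cU_0)\cU_0=(u,\cU)\,\cU$ in $E_2$, and by the biorthogonality $(u,\cU)=\langle u,\phi\rangle$ this is exactly $P_0u=\langle u,\phi\rangle\,\cU$.

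Combining the two steps gives $\frac1t\int_0^tT_su\,ds\to(u,\cU)\,\cU$ in $E_2$, which is the claimed mean ergodicity. This argument is essentially a soft consequence of the main theorem, so there is no serious obstacle; the only point requiring a little care is justifying the termwise integration of the series for $R_su$ and the uniform-in-$s$ control of its tail, which follows cleanly from the Bessel inequality. One should also note in passing that the corollary is precisely the convergence of the Cesàro means of the periodic rotation semigroup $(R_t)_{t\geq0}$ to the projection onto its fixed space $\mathrm{span}(\cU)$, a classical fact about mean ergodic semigroups, so an alternative phrasing is simply: $(T_t)$ is mean ergodic because it is asymptotic (in the strong operator topology) to the mean ergodic periodic semigroup $(R_t)$, with the same ergodic projection $P_0$.
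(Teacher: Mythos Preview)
Your proof is correct and follows essentially the same approach as the paper: both split the Ces\`aro mean as $\frac1t\int_0^t T_su\,ds=\frac1t\int_0^t R_su\,ds+\frac1t\int_0^t(T_su-R_su)\,ds$ (equivalently $m_t=Pm_t+(I-P)m_t$), use Theorem~\ref{theo:asympto:GRE} with a Ces\`aro argument to kill the second piece, and compute the first piece term-by-term via the orthonormality of $(\cU_k)$ and the explicit integrals of $e^{2ik\pi s/\log2}$. The only cosmetic differences are that the paper computes $(m_t,\cU_k)$ directly from the conservation laws~\eqref{eq:balanceTt} rather than integrating the series for $R_su$, and bounds the tail by $\|Pu\|$ rather than $\|u\|$.
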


\begin{proof}
Because of Theorem~\ref{theo:asympto:GRE}, it suffices to prove that
\[\frac 1t\int_0^tR_su\,ds=P\bigg(\frac 1t\int_0^tT_su\,ds\bigg)\xrightarrow[t\to+\infty]{}P_0u.\]
Denoting $m_t=\frac 1t\int_0^tT_su\,ds$ the Ces\`aro means of $(T_tu)_{t\geq0}$ we have
\[Pm_t=\sum_{k=-\infty}^{+\infty}(m_t,\cU_k)\cU_k.\]
By the conservation laws~\eqref{eq:balance} we have for $k\neq0$
\[(m_t,\cU_k)=\frac1t\int_0^t(T_su,\cU_k)\,ds=(u,\cU_k)\frac1t\int_0^te^{\frac{2ik\pi}{\log2}s}ds=(u,\cU_k)\frac{\log2}{2ik\pi}\frac{e^{\frac{2ik\pi}{\log2}t}-1}{t}\]
and $(m_t,\cU_0)=(u,\cU_0).$
This gives
\[Pm_t=P_0u+\frac1t\sum_{k\neq0}(u,\cU_k)\frac{\log2}{2ik\pi}\,\cU_k\big(e^{\frac{2ik\pi}{\log2}t}-1\big).
\]
Since
\begin{align*}
\bigg\|\sum_{k\neq0}(u,\cU_k)\frac{\log2}{2ik\pi}\,\cU_k\big(e^{\frac{2ik\pi}{\log2}t}-1\big)\bigg\|^2&=
\sum_{k\neq0}\Big|(u,\cU_k)\frac{\log2}{2ik\pi}\big(e^{\frac{2ik\pi}{\log2}t}-1\big)\Big|^2\\
&\leq\Big(\frac{\log2}{\pi}\Big)^2\sum_{k\in\Z}|(u,\cU_k)|^2=\Big(\frac{\log2}{\pi}\Big)^2\|Pu\|^2
\end{align*}
we conclude that
\[\|Pm_t-P_0u\|\leq\frac1t\frac{\log2}{\pi}\|Pu\|\xrightarrow[t\to+\infty]{}0.\]
\end{proof}

\section{Numerical solution}\label{sec:num}

\subsection{A first-order non diffusive numerical scheme}
Another way to understand the origin of the oscillatory behaviour is to consider the underlying Piecewise Deterministic Markov Process (PDMP), see e.g.~\cite{BertoinWatson,Cloez,DHKR}.
If we follow a given cell of size $x$ at time $0,$ it is of size $2^{-n} xe^t$ at time $t$ if it has divided $n$ times before $t$; hence,
any of its descendants  has to remain exactly in the countable set $\{2^{-\ell} xe^t,$ $\ell\in \N\}$ at any time.
We can say that we need a ``non-diffusive'' numerical scheme: if the transport rate is not exactly linear but {\it approximately} linear, or if the splitting into two cells does not give rise to two exactly equally-sized but to {\it approximately} two equally-sized daughters,  then the numerical scheme computes the solution of an {\it approximate} equation, which is proved, after renormalization, to converge exponentially fast toward a steady behaviour. Looking at the descendants, it means that instead of remaining in the countable set $\{2^{-\ell} xe^t,$ $\ell\in \N\}$, they will disperse around, and progressively fill in the space $(0,x e^t).$
 This exponential convergence toward a steady state will give rise only to some damped oscillations.

\medskip

The numerical scheme thus needs to satisfy the two following {conditions}:
\begin{enumerate}
\item the { discretization of the} transport equation $\f{\p}{\p t} u +\f{\p}{\p x} (xu)$ { must be non diffusive. If we use a standard upwind scheme, we would thus like to have a } Courant-Friedrichs-L\'evy (CFL) condition equal to $1.$ { This means that any  point of the grid at time $t$ is transported by the transport equation $\f{\p}{\p t} u +\f{\p}{\p x} (xu)$  to another point of the grid at time $t+\Delta t$.}
\item \label{cond2} The { discretization of the } fragmentation term $4 B(2x)u(t,2x)-B(x)u(t,x)$ { must ensure that if $x$ is a point of the grid, then so is $x/2$ and $2x$ - at least inside the computational domain $[x_{min},x_{max}]$ - so that there is no approximation when applying the fragmentation operator.}
\end{enumerate}
The condition~\ref{cond2} leads us to define the following geometric grid, for given $n,N\in\N^*$:  \begin{equation}\label{eq:grid}
\delta x:=2^{\f{1}{n}}-1,\qquad x_k:=(1+\delta x)^{k-N},\qquad 0\leq k \leq 2N.
\end{equation}
Then, for any $k\in \N,$ $0\leq k\leq 2N,$ $2x_k=x_{k+n}$ is in the grid. The computational domain is $[x_0,x_{2N}]=[2^{-\f{1}{n}},2^{\f{N}{n}}]$. Thanks to the properties of the eigenvector $\cU$ established in~\cite{BCG,DG}, we have $\cU$ quickly vanishing toward $0$ and infinity, so that the truncation does not lead to an important error.

\

For the numerical scheme, it is more convenient to consider the function
\[w(t,x):=xu(t,x)e^{-\lambda t}=xv(t,x),\]
which is solution to the conservative equation
\[\f{\p}{\p t} w(t,x) +\f{\p}{\p x} (xw(t,x)) +B(x)w(t,x)=2B(2x)w(t,2x).\]
The conservation law reads
\[\int_0^\infty w(t,x)\,dx=\int_0^\infty w(0,x)\,dx\]
and we also have the contraction property
\[\|w(t,\cdot)\|_{L^1}\leq\|w(0,\cdot)\|_{L^1}.\]
We consider the semi-implicit scheme with splitting given by
\[\f{w_k^{l+\f12}-w_k^l}{\delta t}+\frac{x_kw_k^l-x_{k-1}w_{k-1}^l}{x_k-x_{k-1}}+B_kw_k^{l+\f12}=0,\quad 1\leq k\leq 2N,\]
\[\f{w_k^{l+1}-w_k^{l+\f12}}{\delta t}=2B_{k+n}w_{k+n}^{l+\f12},\quad 1\leq k\leq 2N,\]
where $B_k:=B(x_k),$ and the influx boundary condition chosen to keep the conservation property at the discrete level
\[w_0^l=\f{x_{2N}}{x_0}w_{2N}^l+\f1{x_0}\sum_{k=1}^n(x_k-x_{k-1})B_kw_k^{l+\f12}.\]
\begin{lemma}
The numerical scheme is conservative in the sense that for all $l\geq0$
\[\sum_{k=1}^{2N}(x_k-x_{k-1})w_k^{l+1}=\sum_{k=1}^{2N}(x_k-x_{k-1})w_k^{l}.\]
\end{lemma}
In order to avoid diffusivity of the numerical scheme we choose the CFL condition
\[\delta t=\frac{\delta x}{1+\delta x}.\]
Indeed under this condition the discretization of the transport term sends exactly a point of the grid on the next point of the grid (the discrete transport follows the characteristics).
Under this CFL condition, the first step of the scheme can be written as
\[w_k^{l+\f12}=\f{1}{1+\delta t B_k}\f{\delta t}{\delta x}w_{k-1}^l\]
which leads to the condensed form of the full scheme
\begin{equation}\label{eq:full1}w_k^{l+1}=\f{1}{1+\delta t B_k}\f{x_{k-1}}{x_k}w_{k-1}^l+\f{2\delta t B_{k+n}}{1+\delta t B_{k+n}}\f{\delta t}{\delta x}w_{k+n-1}^l,\quad 1\leq k\leq 2N,
\end{equation}
and
\begin{equation}\label{eq:full2}w_0^l=(1+\delta t B_1)\bigg[\f{x_{2N}}{x_0}w_{2N}^l+\sum_{k=1}^{n-1}\f{x_k}{x_0}\f{\delta t B_{k+1}}{1+\delta t B_{k+1}}w_k^l\bigg].\end{equation}
This scheme is clearly positive.
Together with the discrete conservation law we deduce that it is a contraction for the discrete $L^1$ norm $\|\cdot\|_1$ defined for a vector $u=(u_k)_{1\leq k\leq2N}$ by
\[\|u\|_1:=\sum_{k=1}^{2N}(x_k-x_{k-1})|u_k|=\frac{\delta x}{1+\delta x}\sum_{k=1}^{2N}x_k|u_k|.\]
\begin{theorem}[Convergence in the $L^1$ norm]\label{theo:numconv}
Consider that $B$ is continuous.
Let $ u^{\rm{in}}\in E_\infty$ such that $\cA  u^{\rm{in}}\in E_\infty,$ and assume that the associated solution $w(t,x)$ belongs to $C^2_b([0,+\infty)\times(0,+\infty)).$
Let $w_k^l$ be the numerical solution obtained by the iteration rule~\eqref{eq:full1}--\eqref{eq:full2} and with the initial data $u_k^0= u^{\rm{in}}(x_k).$
Then for all $r>0$ there exists a constant $C_r>0$ such that for all $T>0$
\[\sup_{t_l\leq T}\|\mathbf e^l\|_1\leq C_rT\big(2^{\f Nn-\f{\log n}{\log2}}+2^{-r\f Nn}\big),\]
where $\mathbf e$ is the ``error'' vector defined by $\mathbf e^l_k=w_k^l-w(t_l,x_k).$
\end{theorem}
This is a convergence result since if $n$ and $N/n$ tend to infinity in such a way that $\f{\log n}{\log2}-\f Nn\to+\infty,$
then the error tends to zero.
For instance if we take $N=\lfloor\f{\e}{\log2}n\log n\rfloor$ with $\e\in(0,1)$ we get a speed of convergence of order $n^{\e-1}+n^{-r\e}.$
Choosing $r=\f1\e-1$ we obtain an order $n^{\e-1}$ for any $\e\in(0,1),$ meaning that the scheme is ``almost'' of order 1 in $n.$
\begin{proof}
We write the scheme in a condensed form $w^{l+1}=Aw^l$ where $A$ is the iteration matrix.
The contraction property reads $\|A\|\leq1,$ and it implies the stability of the scheme.
Now we prove the consistency.
Taylor expansions give
\[w(t_{l+\f12},x_k)=w(t_l,x_k)+\f{\delta t}2\p_tw(t_{l+\f12},x_k)+O(\delta t^2)\]
\[x_kw(t_l,x_k)=x_{k-1}w(t_l,x_{k-1})+(x_k-x_{k-1})\p_x(xw)(t_l,x_k)+O((x_k-x_{k-1})^2)\]
and so
\begin{align*}
w(t_{l+\f12},x_k)=\f{x_{k-1}}{x_k}w(t_l,x_{k-1})+\delta t\Big[\f12\p_tw(t_{l+\f12}&,x_k)+\p_x(xw)(t_l,x_k)\Big]\\
&+O(\delta t^2+(\delta t)(x_k-x_{k-1})).
\end{align*}
We get
\begin{align*}
w(t_{l+\f12},x_k)&=\f{1}{1+\delta t B_k}\f{x_{k-1}}{x_k}w(t_l,x_{k-1})\\
&\quad+\f{\delta t}{1+\delta t B_k}\Big[\f12\p_tw(t_{l+\f12},x_k)+\p_x(xw)(t_l,x_k)+B_kw(t_{l+\f12},x_k)\Big]
\\ &\quad\quad+O(\delta t^2+(\delta t)(x_k-x_{k-1})).
\end{align*}
Now from
\[w(t_{l+1},x_k)=w(t_{l+\f12},x_k)+\f{\delta t}2\p_tw(t_{l+\f12},x_k)+O(\delta t^2)\]
we deduce
\begin{align*}
w(t_{l+1},x_k)&=\f{1}{1+\delta t B_k}\f{x_{k-1}}{x_k}w(t_l,x_{k-1})+\f{2\delta t B_{k+n}}{1+\delta t B_{k+n}}\f{\delta t}{\delta x}w(t_l,x_{k+n-1})\\
&\quad+\f{\delta t}{1+\delta t B_k}\Big[\f12\p_tw(t_{l+\f12},x_k)+\p_x(xw)(t_l,x_k)+B_kw(t_{l+\f12},x_k)\Big]\\
&\quad\quad+\f{\delta t}2\p_tw(t_{l+\f12},x_k)-\f{2\delta t B_{k+n}}{1+\delta t B_{k+n}}\f{\delta t}{\delta x}w(t_l,x_{k+n-1})
\\&\quad\quad\quad +O(\delta t^2+(\delta t)(x_k-x_{k-1})).
\end{align*}
It remains to estimate
\begin{align*}
\e_k^l&:=\f12\Big[\f{1}{1+\delta t B_k}+1\Big]\p_tw(t_{l+\f12},x_k)+\f{1}{1+\delta t B_k}\p_x(xw)(t_l,x_k)\\
&\qquad+\f{B_k}{1+\delta t B_k}w(t_{l+\f12},x_k)-\f{2 B_{k+n}}{1+\delta t B_{k+n}}\f{1}{1+\delta x}w(t_l,x_{k+n-1})
\end{align*}
and the boundary condition
\[\e_0^l:=w(t_l,x_0)-(1+\delta t B_1)\bigg[\f{x_{2N}}{x_0}w(t_l,x_{2N})+\sum_{k=1}^{n-1}\f{x_k}{x_0}\f{\delta t B_{k+1}}{1+\delta t B_{k+1}}w(t_l,x_k)\bigg].\]
Using that
\[\p_x(xw)(t_l,x_k)=\p_x(xw)(t_{l+\f12},x_k)+O(\delta t)\]
\[w(t_l,x_{k+n-1})=w(t_{l+\f12},x_{k+n})+O(\delta t+(x_k-x_{k-1}))\]
and
\[\p_tw(t_{l+\f12},x_k)+\p_x(xw)(t_{l+\f12},x_k)+B_kw(t_{l+\f12},x_k)-2B_{k+n}w(t_{l+\f12},x_{k+n})=0\]
we get
\begin{align*}
|\e_k^l|&\leq\f{1}{1+\delta t B_k}\bigg[\f{\delta t B_k}{2}|\p_tw(t_{l+\f12},x_k)|\\
&\qquad+\Big|\f{1+\delta t B_k}{1+\delta t B_{k+n}}\f{1}{1+\delta x}-1\Big|2B_{k+n}w(t_{l+\f12},x_{k+n})\bigg]
+O(\delta t+(x_k-x_{k-1}))\\
&\leq\f{1}{1+\delta t B_k}\bigg[\f{\delta t B_k}{2}\|\cA  u^{\rm{in}}\|_{E_\infty}x_k\cU(x_k)\\&\qquad+\f{\delta t B_k+\delta x+\delta t B_{k+n}+\delta t\delta x B_{k+n}}{(1+\delta t B_{k+n})(1+\delta x)}2B_{k+n}\| u^{\rm{in}}\|_{E_\infty}x_{k+n}\cU(x_{k+n})\bigg]\\
&\hspace{83mm}+O(\delta t+(x_k-x_{k-1}))\\
&\leq\delta t\bigg[\f{\|\cA  u^{\rm{in}}\|_{E_\infty}}{2}\max_k(x_kB_k\cU(x_k)) \\ 
&\qquad+2\| u^{\rm{in}}\|_{E_\infty}\max_{k}(B_{k+n}(B_k+1+2B_{k+n})x_{k+n}\cU(x_{k+n}))\bigg]\\
&\hspace{83mm}+O(\delta t+(x_k-x_{k-1}))\\
&=O(\delta t+(x_k-x_{k-1})),
\end{align*}
where we have used that $|w(t,x)|=|x T_tu^{\rm{in}}(x)|\leq\|u^{\rm{in}}\|_{E_\infty}x\,\cU(x)$ and
\[|\p_tw(t,x)|=|x\p_tT_t u^{\rm{in}}(x)|=|xT_t\cA  u^{\rm{in}}(x)|\leq\|\cA  u^{\rm{in}}\|_{E_\infty}x\,\cU(x).\]
The boundedness of $x\mapsto xB(x)\cU(x)$ and $x\mapsto 2B(2x)(1+B(x)+2B(2x))x\,\cU(2x)$ is a consequence of the assumptions~\eqref{hyp:B} on the continuous function $B$ and the estimates on $\cU$ available in~\cite{DG,BCG}.
Similarly for the boundary condition we have
\begin{align*}
|\e_0^l|&\leq\| u^{\rm{in}}\|_{E_\infty}x_0\cU(x_0)+(1+B_1)\bigg[x_{2N}^2\| u^{\rm{in}}\|_{E_\infty}x_{2N}\cU(x_{2N})\\
&\hspace{60mm}+\delta t\f{\| u^{\rm{in}}\|_{E_\infty}}{x_0}\sum_{k=1}^{n-1}x_k^2B_{k+1}\cU(x_k)\bigg]
\end{align*}
which is small when $\frac Nn$ is large since $x_0=2^{-\f Nn}, x_n=2x_0, x_{2N}=2^{\f Nn}$ and we know from~\cite{DG} that for all $r\in\R,$ when $x\to0,$
\[\cU(x)=O(x^r)\]
and from~\cite{BCG} that when $x\to+\infty,$
\[\cU(x)=O(e^{-\f{K_1}{\gamma_1} x^{\gamma_1}}).\]
More precisely for all $r\in\R$ we have
\[|\e_0^l|=O\Big(2^{-(1+r)\f Nn}+2^{3\f Nn}\exp\big(-\tfrac{K_1}{\gamma_1}2^{\gamma_1\f Nn}\big)+2^{-(1+\gamma_0+r)\f Nn}\Big)=O(2^{-r\f Nn}).\]
We conclude by the standard argument of Lax which deduces convergence from stability and consistency.
By definition we have
\[x_k-x_{k-1}=(\delta x) x_k\leq (\delta x) x_{2N}=(2^{\f 1n}-1)2^{\f nN}\sim\f{\log2}{n}2^{\f Nn}\]
\[\delta t=\f{\delta x}{1+\delta x}=1-2^{-\f1n}\sim\f{\log 2}{n}\]
so that we get
\[\|\mathbf e^{l+1}\|_1\leq \|A\mathbf e^l\|_1+\delta t\, O\Big(\f{2^{\f Nn}}{n}+2^{-r\f Nn}\Big)\leq\|\mathbf e^l\|_1+\delta t\, O\big(2^{\f Nn-\f{\log n}{\log2}}+2^{-r\f Nn}\big)\]
and we conclude by iteration, using that $\mathbf e^0=0.$
\end{proof}

\

\subsection{Illustration}

We illustrate here first the case $B(x)=x^2$: in Figure~\ref{fig:eigen}, we draw the real part of the first eigenvectors, taken for $k=0,1,2$. The oscillatory behaviour will depend on the projection of the initial condition on the space generated by $(\cU_k)$: it will be stronger if the coefficients for $k\neq 0$ are large compared to the projection on $\cU_0.$ We show two results for two different initial condition (Figure~\ref{fig:init}, Left and Right respectively), one a peak very close to the Dirac delta in $x=2$ and the other very smooth. In both cases, the solution oscillates, as showed in Figures~\ref{fig:osci:max} and~\ref{fig:osci:size}, though since the projections on $X$ are very different (with a much higher projection coefficient on the positive eigenvector for the smooth case than for the sharp case) these oscillations take very different forms. In the second case, they are so small that for any even slightly diffusive numerical scheme they are absorbed by the diffusion, leading to a seemingly convergence towards the dominant positive eigenvector. We also see that the equation is no more regularizing: discontinuities remain asymptotically for the Heaviside case.

\begin{figure}[ht]
\centering
\includegraphics[width=0.95\textwidth]{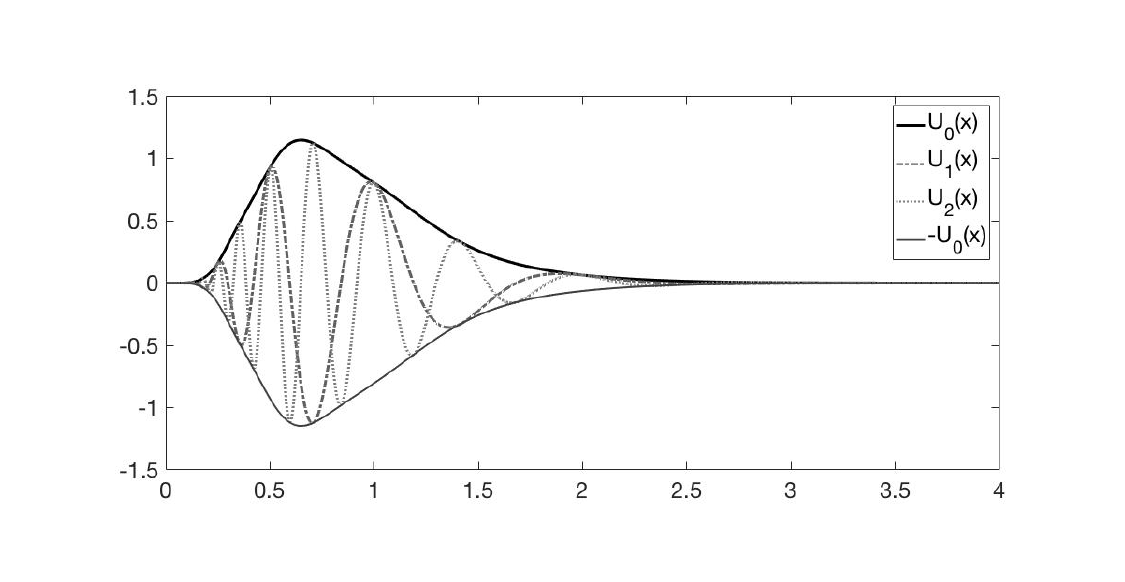}
\caption{\label{fig:eigen} The real part for the three first eigenvectors $\cU_0,\,\cU_1,\,\cU_2$ for $B(x)=x^2$. We see the oscillatory behaviour for $\cU_1$ and $\cU_2$.}
\end{figure}

\begin{figure}[ht]
\centering
\includegraphics[width=0.48\textwidth]{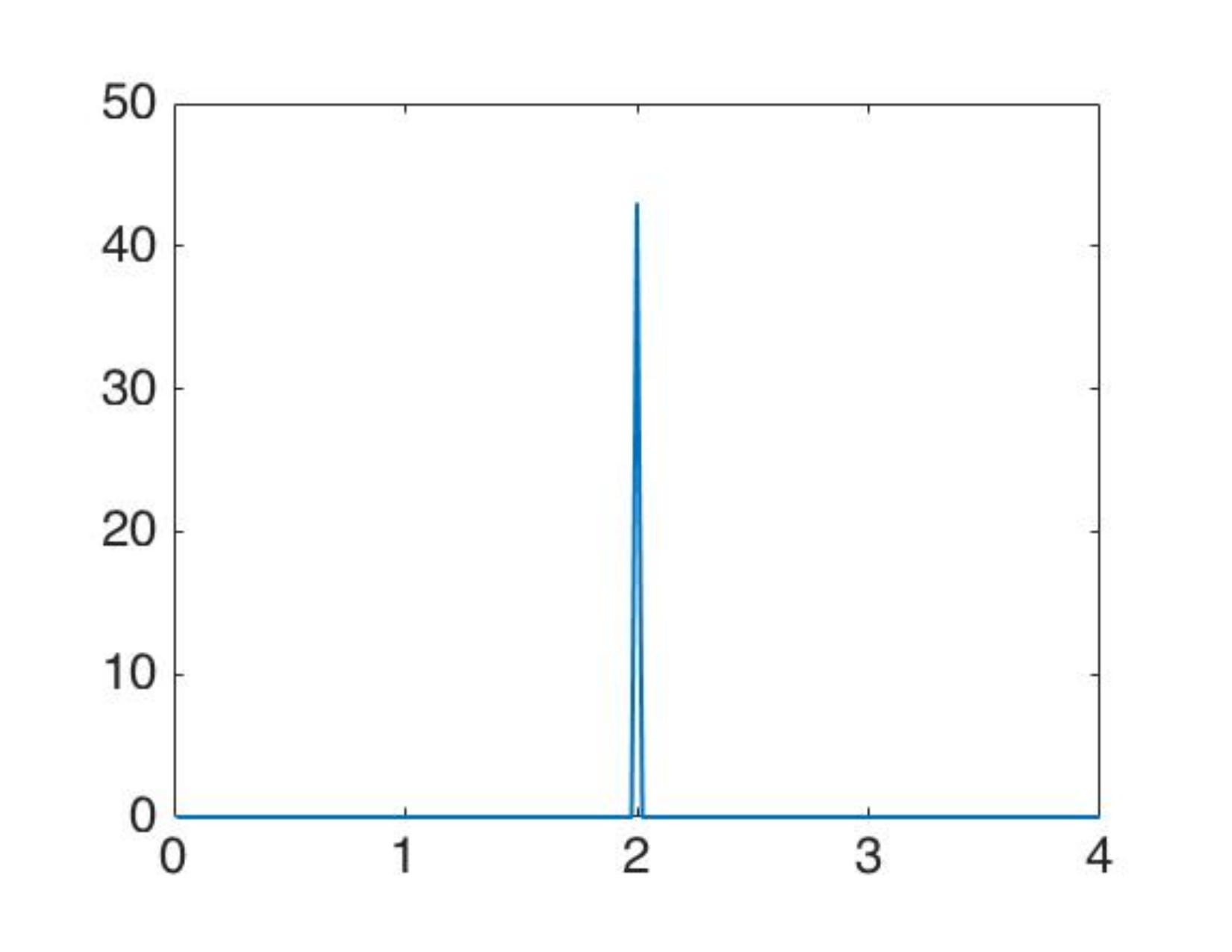}
\includegraphics[width=0.48\textwidth]{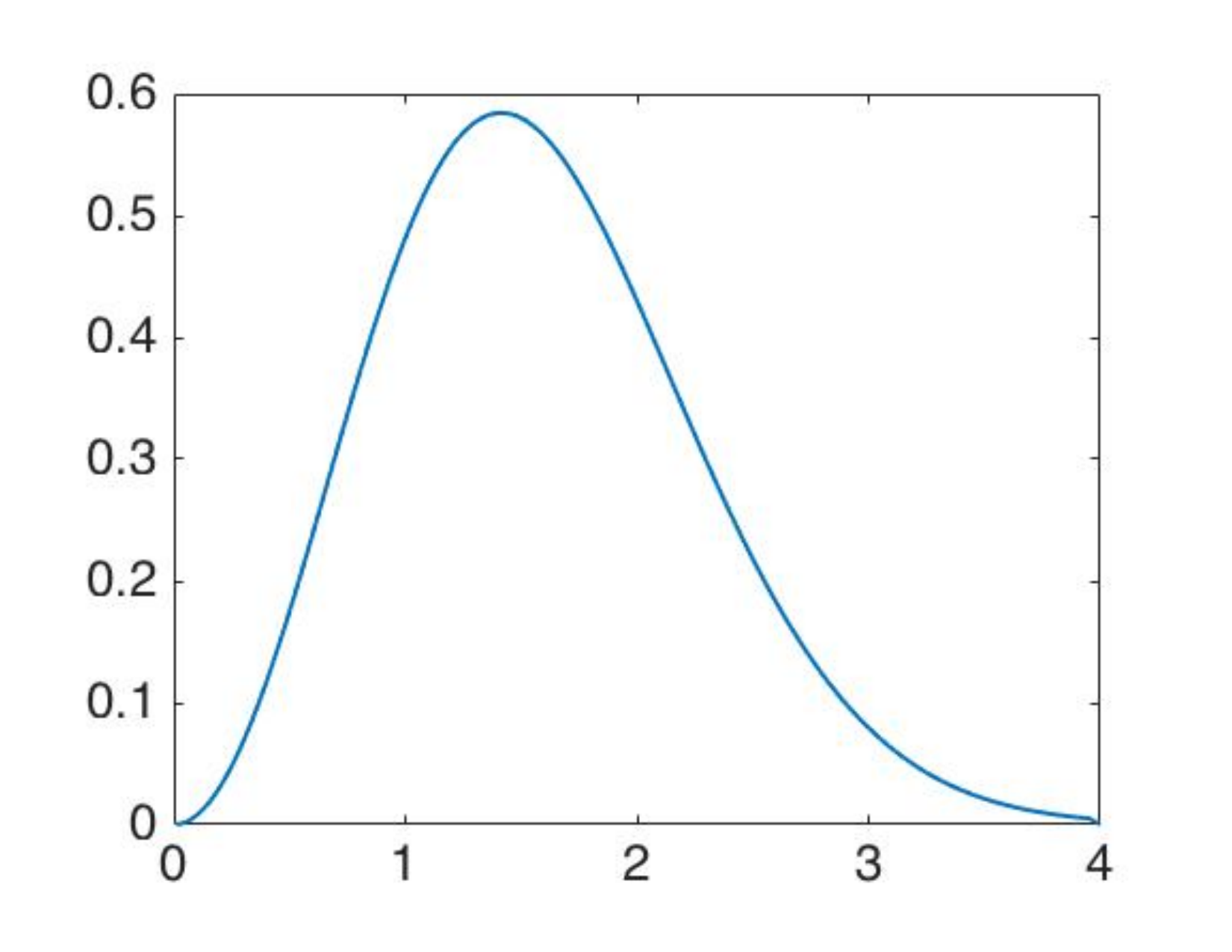}
\caption{\label{fig:init} Two different initial conditions.  \newline Left: peak in $x=2.$ Right: $ u^{\rm{in}} (x)=x^2\exp(-x^2/2)$.}
\end{figure}

\begin{figure}[ht]
\centering
\includegraphics[width=0.48\textwidth]{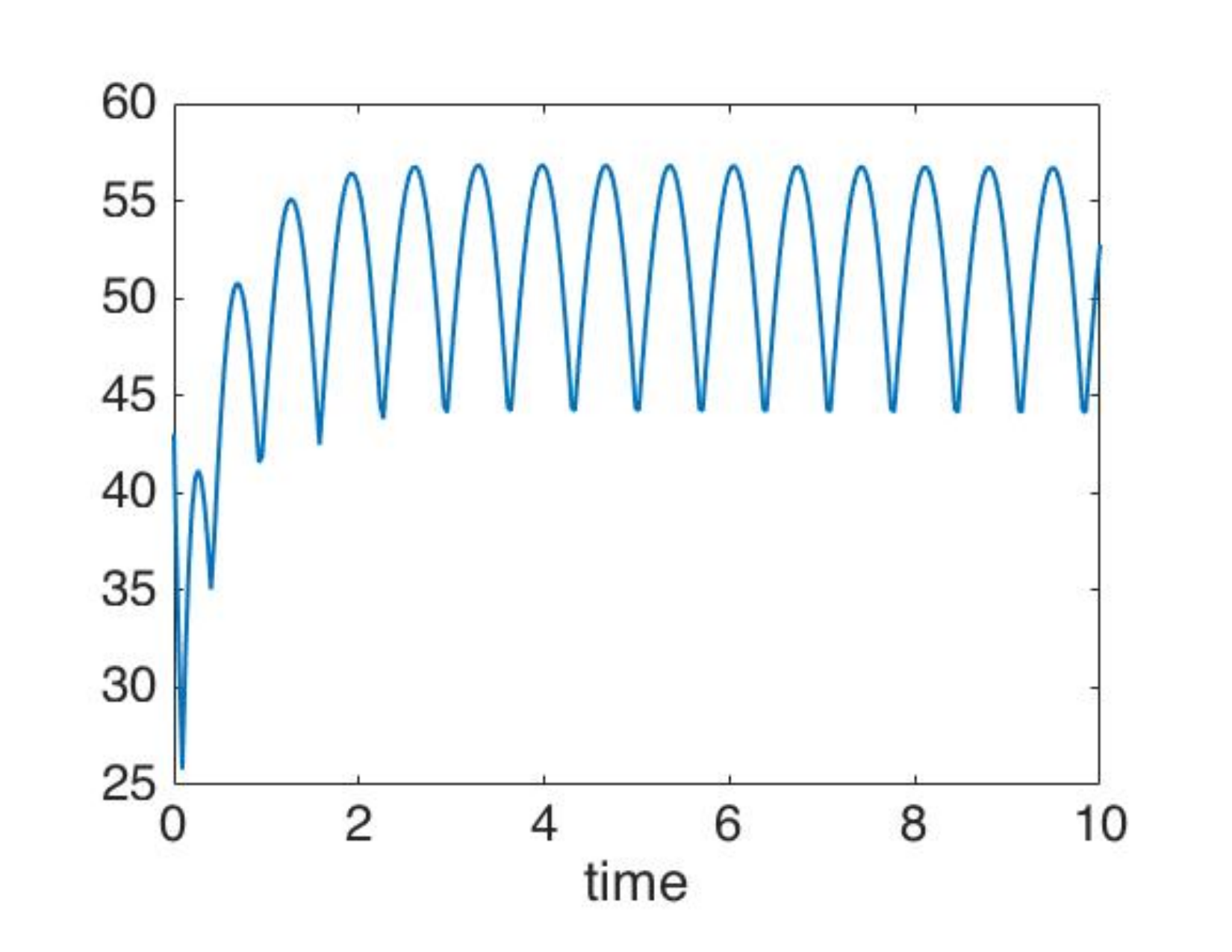}
\includegraphics[width=0.48\textwidth]{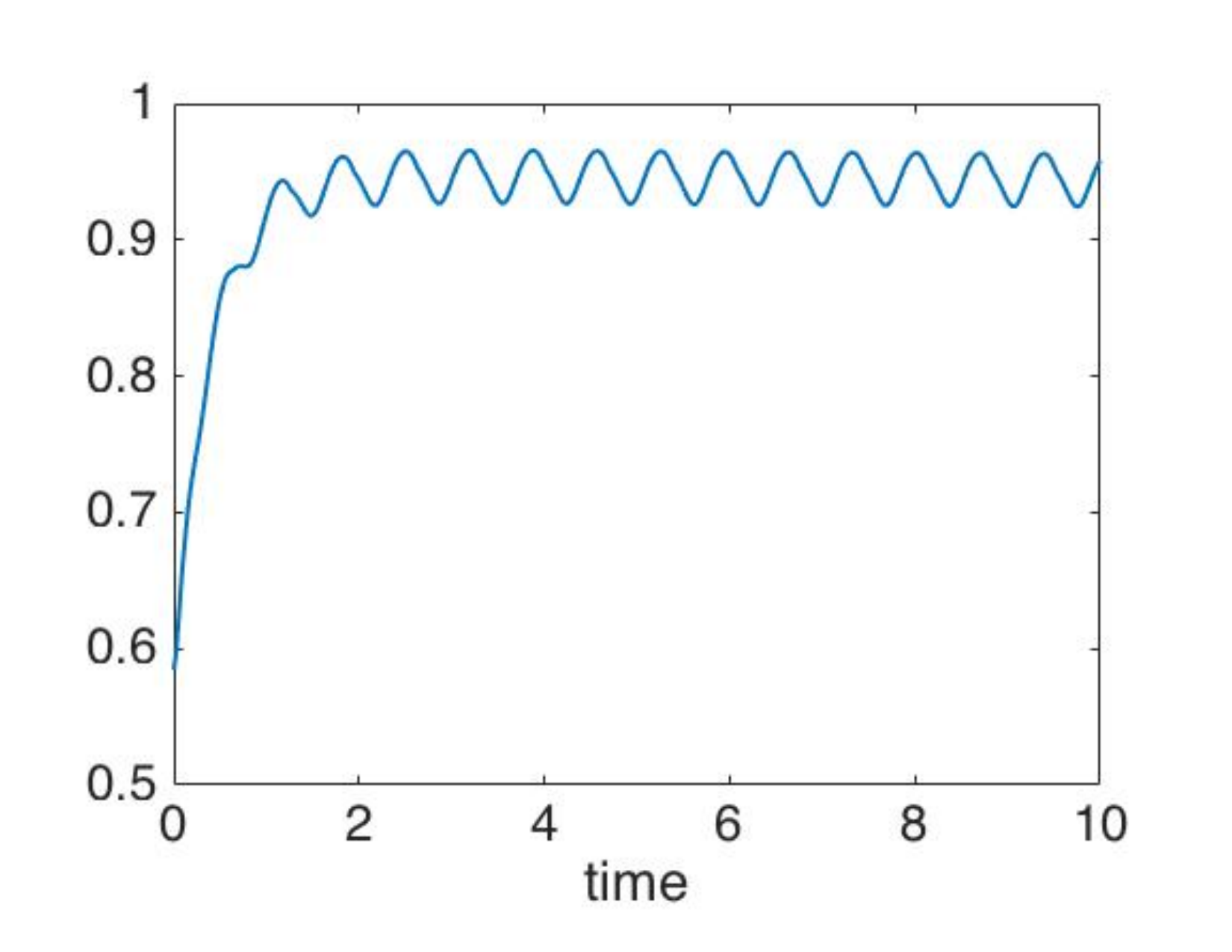}
\caption{\label{fig:osci:max} Time evolution of $\max\limits_{x>0} u(t,x)e^{-t}$. \newline Left: for the peak as initial condition. Right: for the smooth initial condition.}
\end{figure}

\begin{figure}[ht]
\centering
\includegraphics[width=0.48\textwidth]{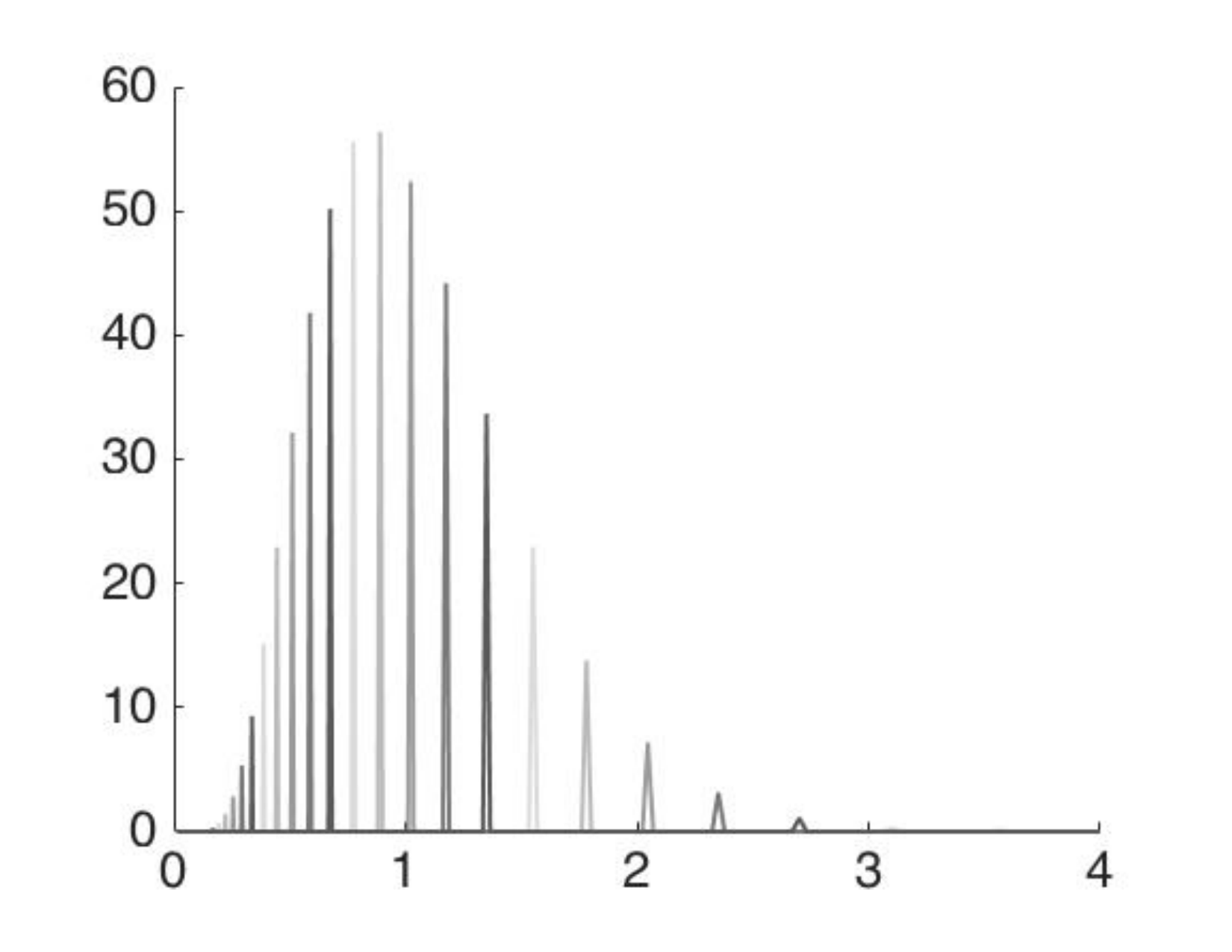}
\includegraphics[width=0.48\textwidth]{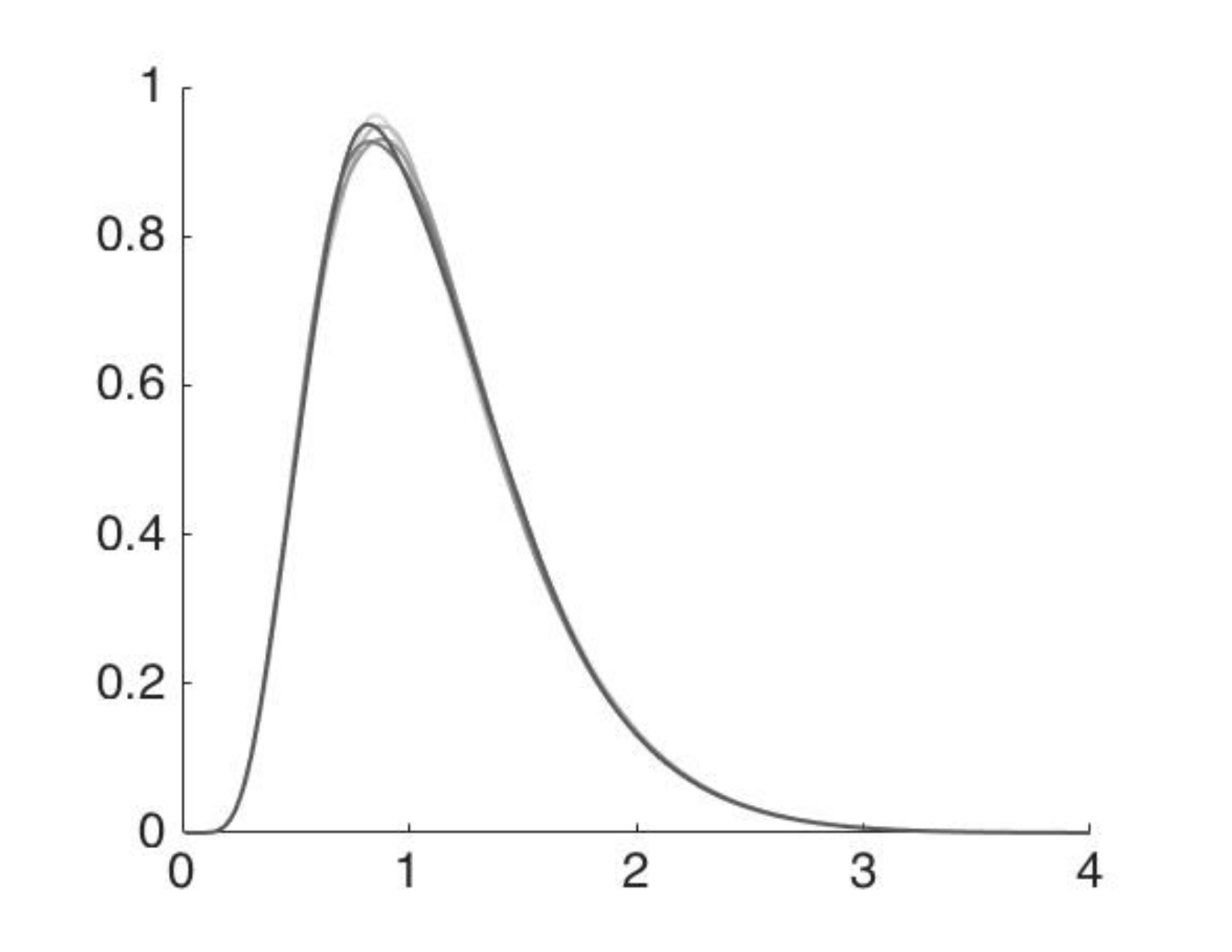}
\caption{\label{fig:osci:size} Size distribution $u(t,x)e^{-t}$ at five different times (each time is in a different grey). Left: for the peak as initial condition. Right: for the smooth initial condition.}
\end{figure}

To explore the speed of convergence in the Theorem~\ref{theo:asympto:GRE} and Corollary~\ref{cor:mean_ergodicity}, we choose $B(x)=x^3,$ take a very smooth initial condition (Figure~\ref{fig:SpectralGap} Left), with $\int\limits_0^\infty u^{\rm{in}} (x)xdx=(u^{\rm{in}},{\mathcal U}_0)=1,$ for which the coefficients $(u^{\rm{in}},{\mathcal U}_k)$ decrease rapidly with $k$, so that $R_t u^{\rm{in}}$ is very well estimated by the series truncated for $k=5.$ We then take a refined grid with $n=500$ and $N=\lfloor\f{n \log n}{2\log 2} \rfloor=2241.$ To estimate
${\mathcal U}_0,$ we use that $\f{1}{\log 2} \int\limits_t^{t+\log2} u(s,x)e^{-s} ds$ tends to ${\mathcal U}_0$, and take this limit value to define ${\mathcal U}_0^n \sim {\mathcal U}_0$ and accordingly $\cU_k^n\sim\cU_k.$ We then define the estimate for $R_t u^{\rm{in}}$ as $(R_t u^{\rm{in}})^n=\sum\limits_{k=-5}^5 (u^{\rm{in}},{\mathcal U}_k^{n}) e^{\f{2ik\pi}{\log2}t} {\mathcal U}_k^{n},$ and define the two error terms in the discrete norm  $E_2^n$ defined as $E_2$:
\begin{equation}\label{def:errornum}
\begin{array}{c}
{\text{Error}}_{E_2^n}^2:=\biggl\Vert u^{n}(t,x)e^{-t}-(R_t u^{\rm{in}})^n\biggr\Vert^2_{E_2^n},\\  {\text{Error Mean}}_{E_2^n}^2:=\biggl\Vert \f{1}{\log 2}
 \int\limits_t^{t+\log2} u^{n}(s,x)e^{-s} ds - {\mathcal U}_0^n \biggr\Vert^2_{E_2^n},
\end{array}
\end{equation}
where $u^n$ is the numerical approximation of $u.$
We observe several phases, which illustrate exactly the theory. First, a very fast decay of the quantity ${\text{Error}}_{E_2^n},$ linked to its  initial very high value since the constant $C$ such that $u^{\rm{in}}\leq C {\mathcal U}_0$ is very large. Then we have a phase of exponential decay for both ${\text{Error}}_{E_2^n}$ and ${\text{Error Mean}}_{E_2^n}$ (the linear decay in Figure~\ref{fig:SpectralGap} Right), corresponding to a spectral gap, as proved in~\cite{GreinerNagel} for the  assumption of a compact support, which is satisfied here due to the truncation. Of note, this phase lasts much more for ${\text{Error Mean}}_{E_2^n}$ than for ${\text{Error}}_{E_2^n},$ most probably due to averaging errors in the non-oscillatory solution. The final phase is either a plateau for ${\text{Error Mean}}_{E_2^n}$ (linked to our definition of ${\mathcal U}_0^n$) or a quadratic increase for ${\text{Error}}_{E_2^n}$, which is linked to the fact that the convergence constant in Theorem~\ref{theo:numconv} depends linearly on the final time.
\begin{figure}[ht]
\centering
\includegraphics[width=0.48\textwidth]{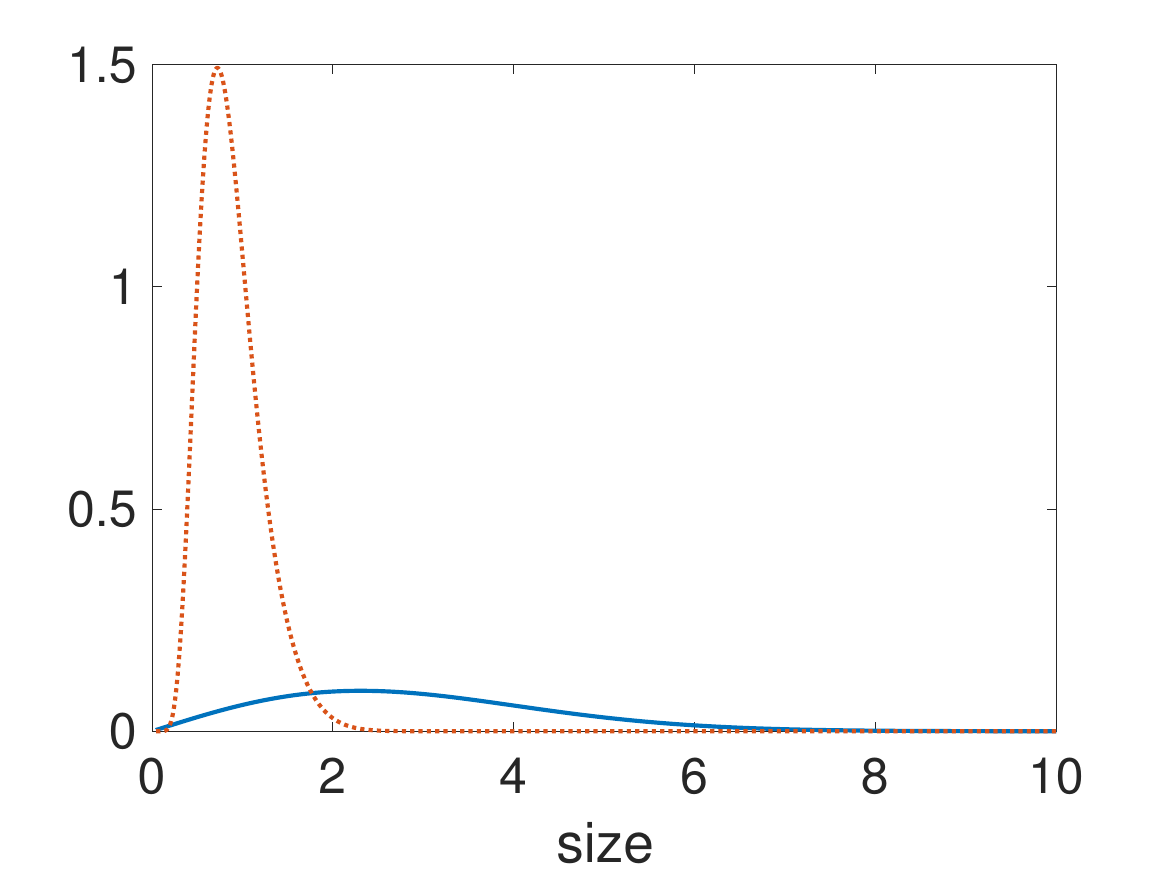}
\includegraphics[width=0.48\textwidth]{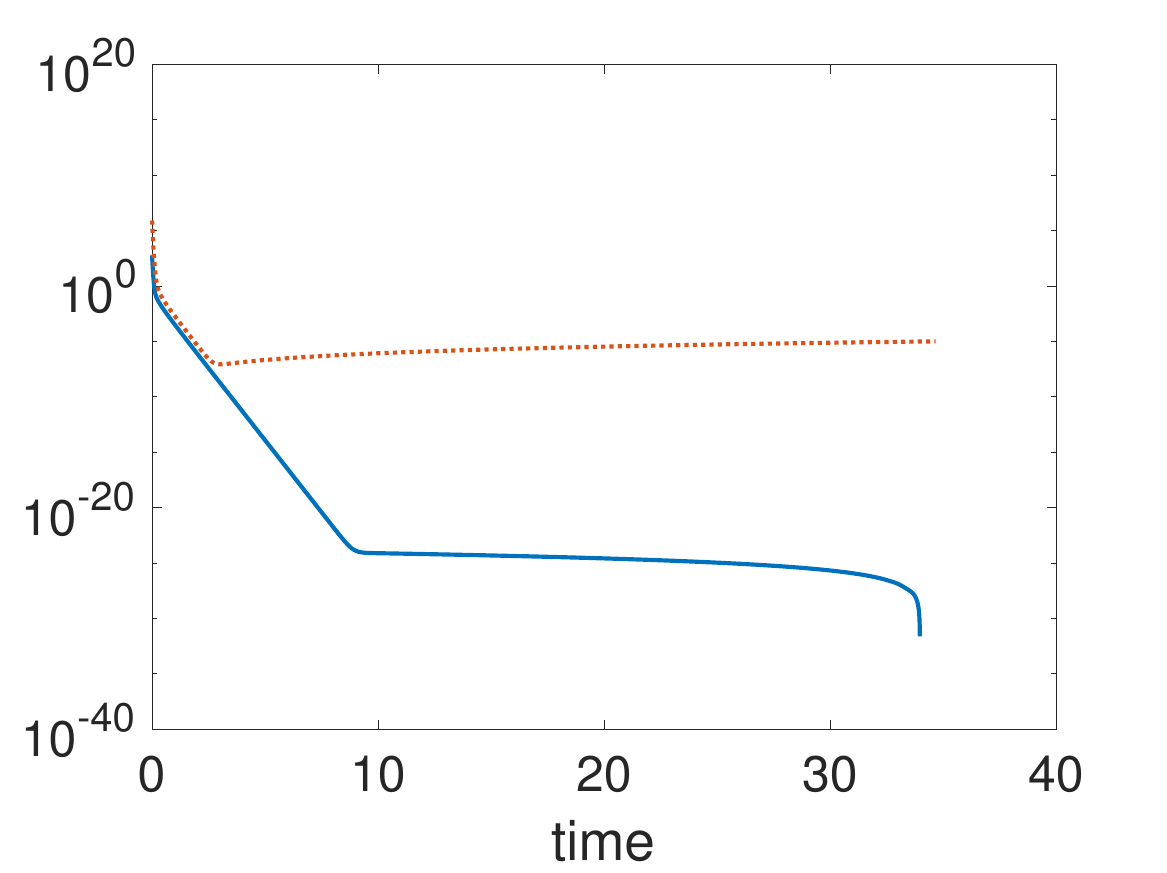}
\caption{\label{fig:SpectralGap} Left: initial distribution (full blue line) and dominant eigenvector (doted red line), for $B(x)=x^3$. We see that the constant such that $u^{\rm{in}}\leq {\mathcal U}_0$ is very large. Right: time evolution of Error$_{E_2^n}$ (doted red line) and Error Mean$_{E_2^n}$ (full blue line), in a log scale for the ordinates.}
\end{figure}

\section*{Discussion}
We studied here the asymptotic behaviour of a non-hypocoercive case of the growth-fragmentation equation. In this case,  the growth being exponential and the division giving rise to two perfectly equal-sized offspring, the descendants of a given cell all remain in a countable set of characteristics. This results in a periodic behaviour, the solution tending to its projection on the span of the dominant eigenvectors. Despite this, we were able to adapt the proofs based on general relative entropy inequalities, which provide an explicit expression for the limit.

\medskip

Our result could without effort be generalised to the conservative case, where only one of the offspring is kept at each division: in Equation~\eqref{eq:main}, the term $4 B(2x)u(2x)$ is then replaced by $2 B(2x)u(2x)$. The consequence is then simply that the dominant eigenvalue is zero, a simple calculation shows that the dominant positive eigenvector is $x\,\cU(x),$ and all the study is unchanged.

Equation~\eqref{eq:main} may also be viewed as a Kolmogorov equation of a piecewise deterministic Markov process, \emph{i.e.} as the equation satisfied by the expectation of the empirical measure of this process, see~\cite{Cloez,Haas}. Our study corresponds exactly to the case without variability in the growth rate studied in~\cite{DHKR}. In~\cite{DHKR}, a convergence result towards an invariant measure for the distribution of new-born cells is proved (this measure being $x B(x) \cU (x)$ up to a multiplicative constant). However, this does not contradict the above study, because the convergence result concerns successive generations and not a time-asymptotics. A deterministic equivalent corresponds to studying the behaviour of a time-average of the equation.
Corollary~\ref{cor:mean_ergodicity} confirms that if we rescale the solution by $e^{-t}$ and average it over a time-period, it does converge towards $\cU$.

\medskip

Our result could also easily be extended to the case where the division kernel is self similar, \emph{i.e.} $k(y,dx)=\f{1}{y}k_0(\f{dx}{y})$, and is a sum of Dirac masses specifically linked by the following relation (see Condition~H in~\cite{DE}): $Supp(k_0)=\Sigma$ where $\Sigma$ is such that
\begin{eqnarray*}
&&\hskip -0.8cm\exists L \in \N^* \cup \{+\infty\},\, \exists \theta \in (0, 1),\,\exists (p_\ell) _{ \ell\in \N , \; \ell\leq L }\subset \N,\, 0<p_\ell <p_{\ell +1}\, \forall \ell \in \N, \;\ell \leq L-1,\\
&&\hskip -0.8cm\Sigma=\left\{\sigma_\ell\in(0, 1);\sigma_\ell=\theta^{p_\ell }\right\}, \qquad (p_\ell)_{0\leq \ell \leq L}\text{ are setwise coprime.}
\end{eqnarray*}
This condition expresses the fact that all the descendants of a given individual evolve permanently on the same countable set of characteristic curves. The case of binary fission into two equal parts  corresponds to  $L=1$ and $p_L=\f{1}{2}.$ Note also that for the same reason, an oscillatory behaviour also happens for the coagulation equation in the case of the so-called \emph{diagonal kernel}~\cite{LaurencotNiethammerVelasquez}.

Other generalisations may also be envisaged, for instance to enriched equations, or to other growth rate functions satisfying $g(2x)=2g(x)$, see~\cite{DiekmannHeijmansThieme1984}, that is, functions of the form $g(x)=x \Phi (\log(x))$ where $\Phi$ a $\log (2)$ periodic function. 

\medskip

An interesting future work could consist in strengthening the convergence result in Theorem~\ref{theo:asympto:GRE}.
Indeed this result does not provide any rate of decay and the speed of convergence may depend on the initial data $ u^{\rm{in}}.$
A uniform exponential convergence would be ensured for instance by~\cite[Proposition C-IV.2.13]{nagel1},
provided that one can prove that $0$ is pole of $\cA.$
This appears to be a difficult question, which is equivalent to the uniform stability of $(T_t)_{t\geq0}$ in $X^\bot$ or to the uniform mean ergodicity of $(T_t)_{t\geq0}$ in $E_2.$

\medskip

We chose to work in weighted $L^2$ spaces because the theory may be developed both very simply and elegantly in this framework, where the terms are interpreted in terms of scalar product and Fourier decomposition. The asymptotic result of Theorem~\ref{theo:asympto:GRE} could most probably be generalised  to weighted $L^1$ spaces, by considering the entropy inequality with an adequate convex functional. Developing a theory in terms of measure-valued solutions, in the same spirit as in~\cite{GwiazdaWiedemann}, would also be of interest, since the equation has no regularising effect:  an initial Dirac mass leads to a countable set of Dirac masses at any time.

\vspace{8mm}

\noindent{\bf Acknowledgments}

M.D. has been supported by the ERC Starting Grant SKIPPER$^{AD}$ (number 306321).
P.G. has been supported by the ANR project KIBORD, ANR-13-BS01-0004, funded by the French Ministry of Research.
We thank Odo Diekmann and Rainer Nagel for their very useful suggestions.

\end{document}